\def\vem{\vspace{1em}}
\def\hem{\hspace{0.5em}}
\def\R {\mathbb{R}}
\def\N{\mathbb{N}}
\def\eps{\varepsilon}
\def\HPP{\{y=0\}}
\def\LSet{\Lambda(u)}
\def\ddt{\frac{\partial}{\partial y}}
\def\GC{\mathcal{G}^2_c}
\def\PC{\mathcal{P}^2_c}
\def\SC{\mathcal{S}_c}
\def\GCk{\mathcal{G}^k_c}
\def\PCk{\mathcal{P}^k_c}
\def\OLSet{\{U=0\}}
\def\OLSetn{\{U_n=0\}}
\def\UMY{U-\tfrac{y^2}{2}}
\def\UnMY{U_n-\tfrac{y^2}{2}}
\def\hW{\widehat{W}}
\renewcommand{\ell}{b}
\newcommand{\cH}{\ensuremath{\mathcal H}}
\DeclareMathAlphabet{\mathup}{OT1}{\familydefault}{m}{n}
\newcommand{\dx}[1]{\mathop{}\!\mathup{d} #1}
\newcommand{\dH}[1]{\mathop{}\!\mathup{d}\mathcal{H}^{#1}}
\newcommand{\cW}{\ensuremath{\mathcal W}}
\newtheorem{thm}{Theorem}[section]
\newtheorem{prop}[thm]{Proposition}
\newtheorem{cor}[thm]{Corollary}
\newtheorem{lem}[thm]{Lemma}
\theoremstyle{definition}
\newtheorem{defi}[thm]{Definition}
\newtheorem{rem}[thm]{Remark}
\numberwithin{equation}{section}
\title[Compact Contact Sets]{Compact contact sets of sub-quadratic solutions to the thin obstacle problem} 
\author{Simon Eberle}
\address{Basque Center for Applied Mathematics, Bilbao, Spain}
\email{seberle@bcamath.org}
\thanks{S.E. is supported by the European Research Council under Grant Agreement No 948029.}
\author{Hui Yu}
\address{Department of Mathematics,	National University of Singapore, Singapore}
\email{ huiyu@nus.edu.sg}
\begin{document}

\begin{abstract}
We study global solutions to the thin obstacle problem with at most quadratic growth at infinity. 

We show that every ellipsoid can be realized as the contact set of such a solution. On the other hand, if such a solution has a compact contact set, we show that it must be an ellipsoid. 

\end{abstract}

\maketitle

\section{Introduction}
In this article, we consider \textit{global solutions to the thin obstacle problem}, that is, solutions to 
\begin{equation}
\label{TOP}
\begin{cases}
\Delta u\le 0 &\text{ in $\R^{d+1}$,}\\
u\ge 0 &\text{ on $\{y=0\}$,}\\
\Delta u=0 &\text{ in $\{y\neq 0\}\cup\{u>0\}$,}\\
u(\cdot,y)=u(\cdot,-y) &\text{ for all $y\in\R$}
\end{cases}
\end{equation}
 in the entire space
 \begin{equation}
 \label{EqnDecomposition}
 \R^{d+1}=\{(x,y):x\in\R^d,y\in\R\}
 \end{equation}
with $d\ge2.$ We impose the symmetry assumption $u(\cdot,y)=u(\cdot,-y)$ for simplicity. This poses no restriction on the class of solutions that we consider, as any solution can be symmetrized by taking its even part in the $y$-variable. 

This system of equations describe the height of an elastic membrane resting on a lower-dimensional obstacle. In this context, the region where the membrane is in contact with the obstacle is called the \textit{contact set}, namely, 
\begin{equation}
\label{CSet}
\Lambda(u):=\{u=0\}\cap\HPP.
\end{equation}

In the past few decades, the thin obstacle problem has been the subject of intensive research. See, for instance,  classical results in \cite{AC, ACS, GP,U} and recent developments in \cite{CSV, DS, FeR, FSe, FoS, KPS, SY1, SY2}. For a gentle introduction to the thin obstacle problem and its neighboring fields, the reader could consult the monograph by Petrosyan-Shahgholian-Uraltseva \cite{PSU}. 

Most of these results focus on local properties of  the solution, especially near a contact point. Instead, we aim to study solutions in the entire $\R^{d+1}$, and explore the rigidity in the global problem in terms of possible shapes of the contact set $\Lambda(u)$. 

\vem 

For the classical obstacle problem, such a program has recently been completed. Consider a global solution to the \textit{classical obstacle problem}
\begin{equation}
\label{OP}
\begin{cases}
\Delta U=\chi_{\{U>0\}} &\text{ in $\R^d$,}\\
U\ge0 &\text{ in $\R^d$,}
\end{cases}
\end{equation}
it was conjectured by Shahgholian \cite{Sh} and Karp-Margulis \cite{KM} that the contact set $\{U=0\}$, after removing invariant directions of the solution,  can only be a point, an ellipsoid or a paraboloid. 

This conjecture was first considered for the case when the contact set $\{U=0\}$ is compact. In this case, the conjecture was confirmed by the works of Dive \cite{Di}, DiBenedetto-Friedman \cite{DF} and Friedman-Sakai \cite{FSa}. A short proof was found by Eberle-Weiss \cite{EW}. 

Without the compactness assumption, the conjecture was solved in two dimensions by Sakai \cite{Sa} with complex variable techniques. In dimensions no lower than 6, it was confirmed by Eberle-Shahgholian-Weiss \cite{ESW}. Finally in Eberle-Figalli-Weiss \cite{EFW}, the conjecture was proved in its full generality, completing a program that has lasted for more than ninety years. 

\vem

Apart from Sakai's result in two dimensions, all of these works rely on the explicit relation between a domain and its Newtonian potential. With this relation, it is straightforward to construct  a solution whose contact set is a given ellipsoid. With a non-trivial fixed point argument by DiBenedetto-Friedman \cite{DF}, this relation also allows the construction of a solution with an ellipsoidal contact set and a prescribed blow-down profile. 

Such a relation no longer holds for the thin obstacle problem. As a result, not much is known about  possible contact sets of solutions to \eqref{TOP}, even in the compact regime. It is not clear how to construct a solution with a given ellipsoid as its contact set. Nor is it clear whether ellipsoids are the only possible compact contact sets of global solutions\footnote{It was conjectured by one of the authors with collaborators that ellipsoids are not the only possible compact contact sets. See Remark \ref{RemConjecture}.}. 

\vem

In this article, we answer these questions for a particular class of solutions to the thin obstacle problem \eqref{TOP}.

Without tools from potential theory, we  construct global solutions to \eqref{TOP} by approximating {with a linearization of} solutions to the classical obstacle problem \eqref{OP} around the polynomial $\tfrac{1}{2} y^2$. This linearization was already used in Figalli-Serra \cite{FiSe} and Savin-Yu \cite{SY3}.

This method poses a restriction on the type of solutions we consider. Since solutions to the classical obstacle problem have quadratic growth, we can only access solutions  that grow at most quadratically, that is, solutions satisfying:
\begin{equation}
\label{IntroSubQuad}
\sup_{(x,y)\in\R^{d+1}}\frac{|u(x,y)|}{|(x,y)|^2+1}<+\infty.
\end{equation} 

We first construct solutions to \eqref{TOP} with given ellipsoids as contact sets:
\begin{thm}
\label{IntroMainThm1}
For $d\ge2$,
given $\ell_j>0$ for $j=1,2,\dots,d$ and the corresponding thin ellipsoid
$$
E':= \Big \{ (x,y) \in \R^{d} \times \R: \sum_{1\le j\le d}  \tfrac{x_j^2}{\ell_j^2}  \le1, y = 0 \Big \}\subset\HPP,
$$
there is a global solution to the thin obstacle problem \eqref{TOP} with at most quadratic growth \eqref{IntroSubQuad} such that 
$$
\LSet=E'.
$$

Moreover, if $v$ is another solution with at most quadratic growth and $\Lambda(v)=E'$, then there is $c\in\R$ such that 
$$
v=cu.
$$
\end{thm}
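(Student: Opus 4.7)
The plan is to follow the linearization strategy sketched in the introduction: construct $u$ as a scaled limit of classical obstacle problem solutions whose contact sets are full ellipsoids collapsing to $E'$, and close uniqueness by a rigidity at infinity combined with a Liouville-type argument.

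\textbf{Existence.} For each small $\eps > 0$, let $E_\eps \subset \R^{d+1}$ be the full ellipsoid with semi-axes $\ell_j(\eps) \to \ell_j$ in the $x_j$-directions and $\eps$ in $y$. By DiBenedetto--Friedman~\cite{DF}, there is a unique solution $U_\eps$ to \eqref{OP} with $\{U_\eps = 0\} = E_\eps$, symmetric in $y$ and of quadratic growth. Set
\[
u_\eps := \lambda_\eps^{-1}\bigl(U_\eps - \tfrac{1}{2}y^2\bigr),
\]
for a normalization $\lambda_\eps$ of order $\eps$. Since $\Delta U_\eps = \chi_{\{U_\eps > 0\}}$ and $\Delta(\tfrac{1}{2}y^2) = 1$, we get $\Delta u_\eps = -\lambda_\eps^{-1}\chi_{E_\eps}$, so $u_\eps$ is harmonic off $E_\eps$, superharmonic globally, equals $-y^2/(2\lambda_\eps) = O(\eps)$ on $E_\eps$ (hence tends to $0$ there), and is non-negative on $\{y = 0\}$ and even in $y$ (inherited from $U_\eps \ge 0$ and the symmetry of $U_\eps$). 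Extracting a subsequential local-uniform limit yields $u$ solving \eqref{TOP} with at most quadratic growth. The semi-axes $\ell_j(\eps)$ are chosen via a continuous-dependence / implicit-function argument so that $E_\eps \to E'$ and $\Lambda(u) = E'$ exactly; the non-degeneracy of $U_\eps$ at $\partial E_\eps$ (from classical regularity theory) prevents the limit contact set from collapsing further.

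\textbf{Uniqueness.} Given another solution $v$ with $\Lambda(v) = E'$ and quadratic growth, expand $u = P_u + R_u$ and $v = P_v + R_v$ at infinity, with $P_u, P_v$ harmonic polynomials of degree at most $2$ even in $y$ and $R_u, R_v$ decaying. The constraints of the thin obstacle problem---non-negativity on $\{y = 0\}$, the sign condition $\partial_y u(\cdot, 0^+) \le 0$ on $E'$ coming from $\Delta u \le 0$, and $\Lambda(u) = E'$ exactly---combined with $u|_{E'} = 0$ restrict $P_u$ to a one-dimensional cone of admissible leading polynomials determined by $E'$. Choosing $c \in \R$ with $P_v = c P_u$, the difference $w := v - cu$ is harmonic in $\R^{d+1} \setminus E'$, vanishes on $E'$, is even in $y$, and has strictly sub-quadratic growth. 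A Liouville-type analysis (sub-quadratic harmonic outside a compact set yields at most linear growth; combined with vanishing on the full-dimensional set $E' \subset \{y=0\}$ and the admissibility constraints inherited from $u, v$ being valid thin obstacle solutions) then forces $w \equiv 0$, so $v = cu$.

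\textbf{Main obstacle.} The central difficulty is the rigidity step: showing that the admissible leading polynomials $P_u$ for solutions of \eqref{TOP} with contact set $E'$ form a single one-dimensional cone (equivalently, that the $2$-homogeneous blow-down of $u$ at infinity is unique up to a positive scalar). This is presumably tied, via the linearization framework of the existence construction, to the uniqueness of the approximating classical obstacle problem solutions $U_\eps$, each of which is uniquely determined by its contact ellipsoid $E_\eps$.
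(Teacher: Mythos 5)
Your overall strategy for existence (linearize the classical obstacle problem around $\tfrac12 y^2$ over flattening ellipsoids) is the one the paper uses, but two of the steps you sketch are not how the argument actually closes, and the uniqueness part has a genuine gap.

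\textbf{Existence.} You never justify why the family $u_\eps$ is precompact. This is the main technical content of the paper's Section~4: Weiss' monotonicity formula applied to $W := U_\eps - \tfrac12 y^2$ gives a frequency bound $R\int_{B_R}|\nabla W|^2 \le 2\int_{\partial B_R}W^2$ (Lemma~\ref{LemFrequencyBound}), which combined with convexity of $U_\eps$ yields locally uniform gradient bounds on the normalized $\hW$ (Corollary~\ref{CorGradientBound}). A vague normalization $\lambda_\eps$ ``of order $\eps$'' does not deliver this; the normalization that makes the argument work is $\lambda_\eps = \|U_\eps - \tfrac12 y^2\|_{L^2(\partial B_1)}$, with the doubling estimate \eqref{EqnDoubling} controlling the growth. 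Separately, your ``continuous-dependence / implicit-function argument'' for pinning down $\Lambda(u) = E'$ is not needed and is not what the paper does: one simply takes the literal thickenings $E_n = \{\sum x_j^2/\ell_j^2 + n^2 y^2 \le 1\}$, which converge to $E'$ trivially, and then shows $\Lambda(u)$ cannot be strictly larger than $E'$ by a convexity-plus-unique-continuation argument (if $X \in \Lambda(u)\setminus E'$, convexity of $u$ in the thin directions forces $\Lambda(u) \supset \mathrm{Conv}(\{X\}\cup E')$, where $u = \Delta u = 0$ off $E'$; unique continuation then kills $u$). ``Non-degeneracy of $U_\eps$ at $\partial E_\eps$'' plays no role here.

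\textbf{Uniqueness.} This is where the real gap is. You reduce to the claim that the admissible leading quadratic polynomials for contact set $E'$ form a one-dimensional cone, and explicitly flag that you do not know how to prove it. But via the bijection of Theorem~\ref{ThmBijection}, that claim is \emph{equivalent} to the uniqueness statement you are trying to prove, so assuming it is circular. The paper circumvents this. Using the symmetries of $E'$ it first shows (Lemma~\ref{LemSymmetryOfPoly}) that the polynomial expansions $p_1, p_2$ are diagonal with no linear terms; it then normalizes to make a \emph{single} coefficient agree, say $\partial_{x_1} p_1 = \partial_{x_1} p_2$. Then $\partial_{x_1}(u_1 - u_2)$ is harmonic outside $E'$, vanishes on $E'$ and at infinity, hence vanishes identically by the maximum principle; so $u_1 - u_2$ is independent of $x_1$, vanishes on the infinite cylinder obtained by extruding $E'$ along $x_1$, and by Lemma~\ref{LemIdentifyDelta} also has vanishing $y$-derivative there off $E'$. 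Unique continuation then forces $u_1 = u_2$ — with no need to first establish that the full quadratic parts are proportional. Your Liouville-type conclusion for $w = v - cu$ (harmonic off $E'$, decaying, vanishing on $E'$ implies $w\equiv 0$) is fine as a final step, but only once $P_v = cP_u$ is in hand, which is exactly the step you lack.
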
 
The contact set $\LSet$ is defined in \eqref{CSet}.

On the other hand, we show that for solutions with  sub-quadratic growth, ellipsoids are the only possible contact sets in the compact regime:
\begin{thm}
\label{IntroMainThm2}
For $d\ge2$,
suppose that $u$ is a global solution to the thin obstacle problem \eqref{TOP} with at most quadratic growth \eqref{IntroSubQuad}. 

If the contact set $\LSet$ is compact and has non-empty interior in the thin space $\HPP$, then $\LSet$ is a thin ellipsoid. 
\end{thm}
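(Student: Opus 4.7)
My approach is to reverse the linearization from the proof of Theorem \ref{IntroMainThm1} and reduce to the rigidity classification of compact contact sets for the classical obstacle problem \eqref{OP} as ellipsoids, due to Dive, DiBenedetto-Friedman, Friedman-Sakai, and Eberle-Weiss.

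First, I analyze $u$ at infinity. Since $u$ is harmonic outside the compact set $\Lambda(u)$, even in $y$, and satisfies \eqref{IntroSubQuad}, the rescalings $u(Rz)/R^2$ converge as $R\to\infty$ to a degree-$2$ harmonic polynomial $P$, even in $y$, with $P(\cdot, 0) \geq 0$. The compactness and nonempty interior of $\Lambda(u)$ in fact exclude strictly sub-quadratic growth: representing $u$ as the Riesz potential in $\R^{d+1}$ of the nontrivial nonnegative measure $-\Delta u$ supported on $\Lambda(u)$ plus a sub-quadratic harmonic correction, and combining $u=0$ on $\Lambda(u)$ with $u\geq 0$ on $\{y=0\}$, forces a strictly positive potential to vanish on a set of positive $d$-dimensional measure, a contradiction. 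Consequently $P\not\equiv 0$, and one checks that $P(\cdot, 0)$ is a positive-definite quadratic form on $\R^d$ after a translation, so the asymptotic profile is non-degenerate.

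Next, for each small $\epsilon > 0$, set $Q_\epsilon := \tfrac{y^2}{2} + \epsilon P$: this is a quadratic polynomial on $\R^{d+1}$, even in $y$, with $\Delta Q_\epsilon = 1$ and positive-definite quadratic part. By the existence theory for global solutions of \eqref{OP} with prescribed quadratic asymptotic profile (as in DiBenedetto-Friedman), there is a unique solution $U_\epsilon$ of \eqref{OP} in $\R^{d+1}$, even in $y$, with $U_\epsilon - Q_\epsilon = o(|z|^2)$ at infinity. Its contact set $K_\epsilon := \{U_\epsilon = 0\}$ is compact; by the rigidity classification recalled above, $K_\epsilon$ is an ellipsoid in $\R^{d+1}$. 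As $\epsilon \to 0^+$, $U_\epsilon \to \tfrac{y^2}{2}$ locally uniformly and the $y$-extent of $K_\epsilon$ shrinks to zero.

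Finally, I define $\tilde u_\epsilon := (U_\epsilon - \tfrac{y^2}{2})/\epsilon$. The linearization of \eqref{OP} around $\tfrac{y^2}{2}$ used in the proof of Theorem \ref{IntroMainThm1} (following Figalli-Serra \cite{FiSe} and Savin-Yu \cite{SY3}) shows that subsequential limits $\tilde u$ of the $\tilde u_\epsilon$ are global solutions of \eqref{TOP} with blow-down $P$, whose contact set equals the Hausdorff limit of $K_\epsilon$, and hence is a thin ellipsoid $E' \subset \{y=0\}$. The final step is to identify $\tilde u = u$, from which $\Lambda(u) = E'$ follows. Since both $u$ and $\tilde u$ solve \eqref{TOP} with blow-down $P$, their difference $w := u - \tilde u$ has sub-quadratic growth; a Liouville-type argument exploiting this decay together with the Riesz potential representations of $u$ and $\tilde u$ via their defining measures on $\Lambda(u)$ and $E'$, and the superharmonic/harmonic structure of the thin obstacle problem, forces $w \equiv 0$. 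This last identification -- essentially a uniqueness theorem for global solutions of \eqref{TOP} given the quadratic asymptotic profile -- is the main obstacle in the proof.
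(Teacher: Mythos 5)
Your strategy is essentially the paper's: linearize the classical obstacle problem around $\tfrac{y^2}{2}$ to produce a candidate solution with an ellipsoidal contact set and the same asymptotic profile as $u$, then invoke a uniqueness result to identify the two. That is precisely what the paper does. However, as written the proposal has a genuine gap in the step you yourself flag as the "main obstacle," together with two subsidiary imprecisions worth noting.

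The uniqueness gap. What you need is not that $u$ and $\tilde u$ have the same \emph{blow-down} $P$ (that only controls the top-degree homogeneous part), but that $|u-\tilde u|(X)\to 0$ as $|X|\to\infty$, which requires the full degree-$\le 2$ polynomial expansion of $u$ at infinity — including the linear and constant terms — to agree with that of $\tilde u$. Having the same blow-down only gives that $u-\tilde u$ has sub-quadratic growth, and your suggested Liouville/Riesz-potential argument does not close this: $u-\tilde u$ could a priori differ by a non-trivial linear polynomial without violating anything you've invoked. The paper handles this via the bijection of Theorem~\ref{ThmBijection} (due to \cite{ERW}, extended to $d\ge 2$ in the appendix), after the normalization in subsection~\ref{RemNormalizationOfBijection} which kills the linear part by a translation and the constant by scaling. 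Once the full polynomial expansions agree (so $|u-\tilde u|\to 0$, by the exterior-Liouville theorem), uniqueness follows from a short comparison-principle argument — this is exactly what Lemma~\ref{LemPolyToSolAppen} records — which is considerably more direct than the potential-theoretic route you sketch.

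The normalization of $\tilde u_\epsilon$. You set $\tilde u_\epsilon := (U_\epsilon-\tfrac{y^2}{2})/\epsilon$ and assert convergence to a non-trivial limit, but this requires showing that $\|U_\epsilon-\tfrac{y^2}{2}\|_{L^2(\partial B_1)}/\epsilon$ stays bounded above and away from zero. The paper instead normalizes $\hW_n$ by its $L^2(\partial B_1)$ norm (so compactness is automatic from Lemma~\ref{LemFrequencyBound} and Corollary~\ref{CorGradientBound}) and then separately proves in Step~3 of Lemma~\ref{LemMainLemma2} that the scaling factor $\tfrac{1}{n m_n}$ converges to a finite positive $\beta$; both directions of that estimate are needed and are not immediate.

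Finally, your argument that the blow-down $P$ is non-trivial (i.e.\ that $u$ has genuinely quadratic growth) is on the right track but sketchy; the paper obtains this cleanly as a by-product of Lemma~\ref{LemSolToPolyAppen}, where the top-degree term of the polynomial expansion is identified with a Garofalo--Petrosyan homogeneous polynomial and hence must have even degree $\ge 2$, forcing degree exactly $2$ in the present setting.

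In short: right strategy, same skeleton as the paper, but the decisive identification $u=\tilde u$ is left unproved, and as stated ("same blow-down") it is not even a sufficient hypothesis; the paper's combination of the ERW expansion (Theorem~\ref{ThmBijection}), the normalization of subsection~\ref{RemNormalizationOfBijection}, and the comparison principle is what closes it.
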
 

\begin{rem}
\label{RemConjecture}
If the contact set $\LSet$ is compact and has empty interior in $\HPP$, then it is a point or the empty set. With this, we have a complete classification of compact contact sets for solutions satisfying \eqref{TOP} and  \eqref{IntroSubQuad}.

In \cite[Remark 2]{ERW}, it is conjectured that other shapes could appear as compact contact sets for such solutions. We resolve this conjecture in the negative. 
\end{rem} 

\begin{rem}
The corresponding result in the non-compact regime will be addressed in a future work. On the other hand,  the restriction on the growth rate \eqref{IntroSubQuad} seems essential with our method. 
\end{rem} 

This manuscript is structured as follows: In the next section, we clarify the notations. We  collect then some preliminaries in the third section.  In the fourth section, we construct solutions with prescribed ellipsoidal contact sets as in  Theorem \ref{IntroMainThm1}. In the fifth section, we   construct solutions with given polynomial expansions, which is used to prove Theorem \ref{IntroMainThm2}. The uniqueness of solution corresponding to each ellipsoidal contact set is proved in the sixth section.

\section*{Acknowledgements}
Part of this work was done while the authors were in residence at Institut Mittag-Leffler for the program on ``Geometric Aspects of Nonlinear Partial Differential Equations". They thank the organizers for organizing this wonderful program, and  the institute for its hospitality. 


\section{Notation}
We work in $\R^{d+1}$ with $d\ge2$,  equipped with the Euclidean inner product $X \cdot Y$ and the induced norm $|X|$. We often write $X \in \R^{d+1}$ as $$X= (x,y) \in \R^{d} \times \R.$$ 

The measure $\cH^{d}$ denotes the $d$-dimensional Hausdorff measure. 

For a function $f$ we denote by $f_+ := \max\{f,0\}$ its positive part and by $f_- := \max\{-f,0\}$ its negative part.

For a bounded, measurable set $E \subset \R^{d+1}$ we denote by $V_E$ the \textit{Newtonian potential} of $E$, which is defined  as
\begin{equation}\label{DefNT}
		V_E(X) := \kappa_{d+1} \int_E |X-Y|^{2-(d+1)} \dx{Y}, \qquad \kappa_{d+1} := \frac{1}{(d+1)(d-1) |B_1|}.
\end{equation}
This potential satisfies
\begin{equation}
\Delta V_E=-\chi_E \text{ in }\R^{d+1}.
\end{equation}
Here, as in the remaining part of the paper, the characteristic function of a set $E$ is denoted by $\chi_E$.

\section{Preliminaries}
In this section, we collect some preliminaries, firstly about the thin obstacle problem \eqref{TOP}, then about the classical obstacle problem \eqref{OP}.

\subsection{Solutions to the thin obstacle problem}
Let $u$ be a solution to the thin obstacle problem \eqref{TOP}. By the work of Athanasopoulos-Caffarelli \cite{AC}, it is locally Lipschitz in the entire space, and locally $C^{1,\frac12}$ in the upper half-space $\{y\ge0\}$. 

Although the solution may fail to be differentiable at points on the thin space $\HPP$, it is convenient to use one-sided derivatives. With our decomposition of $\R^{d+1}$ as in \eqref{EqnDecomposition}, we define 
\begin{equation}
\label{EqnOneSidedD}
\ddt u(x,0):=\lim_{t\to 0^+}\frac{u(x,t)-u(x,0)}{t}
\end{equation} 
for a point $(x,0)$ in the hyperplane $\HPP.$

With this,  the non-positive measure $\Delta u$ can be identified as the following {\cite[§9.1.3]{PSU}}. Recall the definition of the contact set $\LSet$ from \eqref{CSet}.
\begin{lem}
\label{LemIdentifyDelta}
Suppose that $u$ is a solution to the thin obstacle problem \eqref{TOP}. Then 
$$
\Delta u=2 \frac{\partial u}{\partial y} \hem \cH^{d}|_{\LSet}.
$$
\end{lem}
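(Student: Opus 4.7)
The plan is to test $\Delta u$ against an arbitrary $\varphi \in C_c^\infty(\R^{d+1})$ and convert the resulting integral to a boundary integral on the thin space $\HPP$ by integrating by parts on each half-space. Since $u$ is harmonic in $\{y \neq 0\} \cup \{u > 0\}$, the non-positive distribution $\Delta u$ is supported in $\LSet = \{u=0\} \cap \HPP$, so only what happens at $\HPP$ can contribute.

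First, I would note that by the Athanasopoulos--Caffarelli regularity recalled just before the lemma, $u$ is $C^{1,1/2}$ up to $\HPP$ from each side, so Green's identity applies on $\{y>0\}$ and $\{y<0\}$. Writing
\begin{equation}
\langle \Delta u, \varphi \rangle = \int_{\R^{d+1}} u \, \Delta \varphi \, dx\, dy = \int_{\{y>0\}} u \, \Delta \varphi + \int_{\{y<0\}} u \, \Delta \varphi,
\end{equation}
and applying Green's identity on each half (with outward unit normal $\mp e_y$), the bulk $\Delta u$ terms vanish because $u$ is harmonic off $\HPP$. The two $u\,\partial_\nu \varphi$ boundary terms cancel by oppositeness of normals, leaving
\begin{equation}
\langle \Delta u, \varphi \rangle = \int_{\HPP} \Big( \tfrac{\partial u}{\partial y}\big|_{y=0^+} + \big(-\tfrac{\partial u}{\partial y}\big|_{y=0^-}\big)\Big) \, \varphi \, d\mathcal{H}^d,
\end{equation}
where $\partial_y u|_{y=0^\pm}$ denotes the one-sided derivatives introduced in \eqref{EqnOneSidedD}.

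The symmetry $u(\cdot,y) = u(\cdot,-y)$ then gives $\partial_y u|_{y=0^-} = -\partial_y u|_{y=0^+}$, so the bracket equals $2 \, \partial_y u$ (in the one-sided sense). Finally, on $\{u>0\}\cap\HPP$ the function $u$ is harmonic in a full neighborhood and is even in $y$, so $\partial_y u = 0$ there; hence only the contact set $\LSet$ contributes, yielding
\begin{equation}
\Delta u = 2\, \tfrac{\partial u}{\partial y} \, \mathcal{H}^d\big|_{\LSet}
\end{equation}
in the distributional sense.

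The main point requiring care is the justification of Green's identity up to $\HPP$ in each half-space; this rests on the $C^{1,1/2}$ regularity up to the thin space, which is precisely what was recorded before the statement of the lemma. No other subtlety is hidden: the symmetry assumption collapses the two-sided normal derivative into a single one-sided jump, and the localization to $\LSet$ is immediate from the PDE satisfied on $\{u>0\}$.
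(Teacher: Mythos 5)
Your argument is correct and is the standard proof of this fact; the paper itself does not prove the lemma but cites it directly from \cite[\S 9.1.3]{PSU}, where essentially this same integration-by-parts argument (split across $\{y>0\}$ and $\{y<0\}$, using the $C^{1,1/2}$ regularity up to $\HPP$ and the even symmetry in $y$) is carried out. Your identification of the two points requiring care — the justification of Green's identity up to the thin space via one-sided $C^{1,1/2}$ regularity, and the vanishing of $\partial_y u$ on $\{u>0\}\cap\HPP$ by evenness plus interior harmonicity — matches what the reference relies on.
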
  

In particular, we have $$u\Delta u=0.$$ This allowed Athanasopoulos-Caffarelli-Salsa \cite{ACS} to apply Almgren's monotonicity formula to  the thin obstacle problem. 

For a point ${X}   \in\HPP$ and a scale $r>0$, \textit{Almgren's frequency function} is defined as 
\begin{equation}
\label{EqnAlmgren}
\Phi(u;{X},r):=\frac{r\int_{B_r({X})}|\nabla u|^2}{\int_{\partial B_r({X})}u^2 {\dH{d}}}.
\end{equation}
When centered at $0$, we omit the dependence on the point ${X}$ in the expression. 
This function is monotone:
\begin{prop}[{\cite[Lemma 1]{ACS}}]
Suppose that $u$ is a solution to the thin obstacle problem \eqref{TOP}, then
$$
\frac{\dx{}}{\dx{r}}\Phi(u;r)\ge 0.
$$
In particular, the following limit exists
$$
\lambda_\infty:=\lim_{r\to+\infty}\Phi(u;r).
$$
\end{prop}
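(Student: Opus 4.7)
The plan is to set up the standard Almgren framework and then identify the key cancellation that makes the thin obstacle problem behave as well as the harmonic case. Write
\begin{equation}
D(r):=\int_{B_r}|\nabla u|^2\quad\text{and}\quad H(r):=\int_{\partial B_r}u^2\dH{d},
\end{equation}
so that $\Phi(u;r)=rD(r)/H(r)$. Taking a logarithmic derivative gives
\begin{equation}
\frac{\Phi'(r)}{\Phi(r)}=\frac{1}{r}+\frac{D'(r)}{D(r)}-\frac{H'(r)}{H(r)},
\end{equation}
and the task is to control each term and recognize a Cauchy--Schwarz structure.

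First I would compute $H'$ by pulling back to the unit sphere: $H(r)=r^{d}\int_{\partial B_1}u(r\omega)^2\dH{d}(\omega)$ gives $H'(r)=\tfrac{d}{r}H(r)+2\int_{\partial B_r}uu_\nu\dH{d}$. Next I would integrate by parts: since Lemma \ref{LemIdentifyDelta} shows $\Delta u$ is supported on $\LSet\subset\HPP$, where $u$ vanishes, we have $u\Delta u\equiv0$ as a measure, so $D(r)=\int_{\partial B_r}uu_\nu\dH{d}$. For $D'$ I would apply the Rellich--Pohozaev identity generated by the vector field $(X\cdot\nabla u)\nabla u-\tfrac12|\nabla u|^2X$, which yields
\begin{equation}
r\int_{\partial B_r}|\nabla u|^2 = 2r\int_{\partial B_r}u_\nu^2+(d-1)D(r)-2\int_{B_r}(X\cdot\nabla u)\,\Delta u.
\end{equation}
The crucial point is that the last term vanishes. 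Indeed, $\Delta u=2u_y\hem\cH^d|_{\LSet}$ and on $\LSet\subset\HPP$ we have $y=0$, so $X\cdot\nabla u=x\cdot\nabla_x u$; but $u=0$ on $\LSet$ while $u\ge0$ on $\HPP$, so the tangential gradient $\nabla_x u$ vanishes $\cH^d$-a.e.\ on $\LSet$. Combining and using $n-2-d=-1$ (with $n=d+1$), the three contributions $1/r$, $(d-1)/r$ in $D'/D$, and $-d/r$ in $H'/H$ cancel exactly, leaving
\begin{equation}
\frac{\Phi'(r)}{\Phi(r)}=\frac{2}{D(r)H(r)}\left(H(r)\int_{\partial B_r}u_\nu^2\dH{d}-D(r)^2\right).
\end{equation}
Monotonicity then follows from Cauchy--Schwarz applied to $D(r)=\int_{\partial B_r}u\cdot u_\nu\dH{d}$. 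Once $\Phi(u;r)$ is non-decreasing in $r$, the limit $\lambda_\infty=\lim_{r\to+\infty}\Phi(u;r)$ exists in $[0,+\infty]$.

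The step I anticipate as the main obstacle is the rigorous justification of the Rellich identity and of the vanishing of $\int_{B_r}(X\cdot\nabla u)\Delta u$, because $u$ is only locally Lipschitz and $\Delta u$ is a genuine measure concentrated on the thin set. The cleanest way to handle this is to approximate $u$ by smooth functions (e.g.\ through the penalized problem $\Delta u_\eps=\beta_\eps(u)$ with $\beta_\eps$ a smooth approximation of an indicator, working on the even extension) so that every manipulation is classical, and then pass to the limit using the convergence theory for the thin obstacle problem; alternatively one can argue directly with the identification in Lemma \ref{LemIdentifyDelta} together with the fact that at $\cH^d$-a.e.\ point of $\LSet$ the tangential gradient vanishes. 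Either route produces the monotonicity with the stated cancellation and the existence of $\lambda_\infty$.
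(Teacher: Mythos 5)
Your argument is correct and is essentially the proof of \cite[Lemma~1]{ACS}, which the paper simply cites without reproducing. The Rellich--Pohozaev identity, the two cancellations $u\,\Delta u=0$ and $(X\cdot\nabla u)\,\Delta u=0$ (the latter because $y=0$ and $\nabla_x u=0$ on $\LSet$, the thin contact set where $u\ge 0$ attains its minimum and is tangentially $C^1$), and the final Cauchy--Schwarz step are exactly the ingredients of the Athanasopoulos--Caffarelli--Salsa argument; your closing remark on justifying the distributional computation by penalization or by the trace identification of Lemma~\ref{LemIdentifyDelta} is also the right way to make it rigorous.
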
 

This limit $\lambda_\infty$ can be viewed as the \textit{frequency at infinity}. It controls the growth of the solution:
\begin{cor}
\label{CorGrowthRateFromAlmgren}
Let $u$ be a global solution to \eqref{TOP}. Then for $r\ge 1$, we have
$$
\int_{\partial B_r}u^2 {\dH{d}}\le  r^{d+2\lambda_\infty}  \int_{\partial B_1}u^2 {\dH{d}}.
$$
\end{cor}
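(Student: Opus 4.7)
The plan is to deduce the stated polynomial growth estimate for $H(r):=\int_{\partial B_r} u^2 \dH{d}$ by writing a logarithmic-derivative identity for $H$ in terms of Almgren's frequency $\Phi(u;r)$, and then invoking the monotonicity of $\Phi$ together with the fact that $\Phi(u;r)\le\lambda_\infty$ for every finite $r$.

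Concretely, first I would establish the classical Rellich-type identity
\[
H'(r) \;=\; \frac{d}{r}\,H(r) + 2\int_{B_r}|\nabla u|^2,
\]
which follows from rescaling $H(r)=r^{d}\int_{\partial B_1} u(rX)^2\,\dH{d}(X)$ and differentiating, and then applying the divergence theorem on the term $\int_{\partial B_r} u\,u_\nu\,\dH{d}$. The usual boundary term $\int_{B_r} u\,\Delta u$ that appears in the divergence identity vanishes identically thanks to Lemma \ref{LemIdentifyDelta}: the measure $\Delta u$ is supported on the contact set $\LSet$, where $u\equiv 0$. (Some mild justification is needed because $u$ is only Lipschitz across $\HPP$, but this is handled by approximating $B_r$ with $B_r\setminus\{|y|<\eps\}$ and passing to the limit, using that $u$ is smooth away from $\HPP$.)

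Dividing by $H(r)$ and recalling the definition \eqref{EqnAlmgren} of $\Phi$, I would rewrite the identity as
\[
\frac{d}{dr}\log H(r) \;=\; \frac{d}{r} + \frac{2\,\Phi(u;r)}{r}.
\]
By monotonicity of $\Phi$ and the existence of the limit $\lambda_\infty$, we have $\Phi(u;r)\le\lambda_\infty$ for every $r>0$. Integrating the above identity from $1$ to $r$ therefore yields
\[
\log\frac{H(r)}{H(1)} \;\le\; d\log r + 2\lambda_\infty\log r,
\]
which exponentiates to the claimed bound
\[
\int_{\partial B_r} u^2 \dH{d} \;\le\; r^{d+2\lambda_\infty}\int_{\partial B_1} u^2 \dH{d}.
\]

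The only non-routine point is justifying the Rellich identity in the presence of the singular measure $\Delta u$ on $\HPP$; this is where the identification in Lemma \ref{LemIdentifyDelta} (and in particular $u\,\Delta u\equiv 0$) is essential, exactly as in the original Athanasopoulos–Caffarelli–Salsa derivation of Almgren's monotonicity for \eqref{TOP}. Everything else is a direct ODE argument.
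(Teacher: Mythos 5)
Your proposal is correct and follows essentially the same route as the paper: both compute the logarithmic derivative of the boundary height function, use $u\Delta u=0$ (via Lemma \ref{LemIdentifyDelta}) to discard the cross term, and then bound $\Phi(u;r)\le\lambda_\infty$ by monotonicity before integrating in $r$. The only cosmetic difference is that you work with the unnormalized $H(r)=\int_{\partial B_r}u^2\dH{d}$ and write out the Rellich identity explicitly, whereas the paper absorbs the factor $r^{-(d+2\lambda_\infty)}$ into $H$ and quotes the identity \eqref{EqnDOfHeight} directly.
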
 
\begin{proof}
Define 
$$
H(r):=\frac{1}{r^{d+2\lambda_\infty}}\int_{\partial B_r}u^2 {\dH{d}}.
$$
Differentiating $H$ leads to 
\begin{equation}
\label{EqnDOfHeight}
\frac{\frac{\dx{}}{\dx{r}}H(r)}{H(r)}=\frac{2}{r}[\Phi(u;r)-\lambda_\infty]+\frac{2\int_{B_r}u\Delta u}{\int_{\partial B_r}u^2 {\dH{d}}}.
\end{equation}
With $u\Delta u=0$ for solutions to \eqref{TOP}, we have
$$
\frac{\dx{}}{\dx{r}}\log H(r)=\frac{2}{r}[\Phi(u;r)-\lambda_\infty].
$$
With the monotonicity of the frequency function, we have 
$$
\Phi(u;r)-\lambda_\infty\le 0 \text{ for all $r$.}
$$
This implies $H(r)\le H(1)$ for $r\ge 1$, the desired the estimate. 
\end{proof} 

In this work, we are interested in solutions with at most quadratic growth and with compact contact sets, that is, solutions in the following class:
\begin{defi}
\label{DefGC}
Suppose that $u$ is a solution to the thin obstacle problem \eqref{TOP} in $\R^{d+1}$ for $d\ge2$ with $\LSet\neq\emptyset.$ 

We say that $u$ has \textit{at most quadratic growth} if it satisfies
$$
\sup_{X\in\R^{d+1}}\frac{|u(X)|}{|X|^2+1}<+\infty.
$$ 
If, further, the contact set $\LSet$  in \eqref{CSet} is compact, we write
$$
u\in\mathcal{G}^2_c.
$$
\end{defi} 


In \cite{ERW}, it was shown that solutions to \eqref{TOP} with polynomial growth and compact contact sets form a finite dimensional space. This was done by establishing a bijection between solutions in this class and a natural class of polynomials. We only need a special case of their result, when the rate of growth is quadratic. 

In this case, the relevant {polynomials are quadratic and satisfy}
$$
p(\cdot, y)=p(\cdot, -y), \hem \Delta p=0,
\text{ and }
\{p(\cdot,0)\le 0\} \text{ is compact}.
$$
If a {quadratic polynomial} $p$ satisfies all these properties, we write
\begin{equation}
\label{EqnPC}
p\in\mathcal{P}_c^2.
\end{equation} 

The bijection between $\GC$ and $\PC$ is given by the following theorem:

\begin{thm}[{\cite[Theorem 1]{ERW}}]
\label{ThmBijection}
For each $u\in\GC$, there is $p\in\PC$ such that 
$$
u=p+v,
$$
where $v$ is the unique solution to the thin obstacle problem that vanishes at inifinity and has $-p$ as the obstacle, namely, 
$$
\begin{cases}
\Delta v\le 0 &\text{ in $\R^{d+1}$,}\\
v\ge-p &\text{ on $\HPP$,}\\
\Delta v=0 &\text{ in $\{y\neq0\}\cup\{v>-p\},$}\\
v(\cdot,y)=v(\cdot,-y)&\text{ for all $y\in\R$,}\\
v(x,y)\to0 &\text{ as $|(x,y)|\to+\infty.$}
\end{cases}
$$
This map $p\mapsto u$ is a bijection between $\PC$ and $\GC$. 
\end{thm}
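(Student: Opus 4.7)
The plan is to prove the bijection in three stages: construct $v$ from $p$ with a limiting procedure, deduce uniqueness and injectivity from comparison at infinity, and prove surjectivity by extracting the polynomial part of $u$ from an exterior harmonic expansion.

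\textbf{Existence of $v$ and the map $p\mapsto u$.} Given $p\in\PC$, the set $\{p(\cdot,0)\le 0\}$ is a compact subset of $\HPP$, so on $\HPP$ the obstacle $-p$ is nonpositive and tends to $-\infty$ at infinity. First I would solve, on each large ball $B_R$, the thin obstacle problem with obstacle $-p$ and zero Dirichlet data, yielding a minimizer $v_R$ of $\int_{B_R}|\nabla w|^2$ over $\{w\ge -p$ on $\HPP\cap B_R$, $w=0$ on $\partial B_R\}$. Since the coincidence set $\{v_R=-p\}\cap\HPP$ is contained in $\{p\le 0\}\cap\HPP$, which is a fixed compact set $K$, the distribution $\Delta v_R$ is supported in $K$. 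Comparison with the Newtonian potential of a bump majorizing $\Delta v_R$ then gives a uniform decay bound $|v_R(X)|\lesssim (1+|X|)^{-(d-1)}$ outside a neighborhood of $K$, using $d+1\ge 3$. Passing to a locally uniform subsequential limit $v_R\to v$ produces a global solution to the thin obstacle problem with obstacle $-p$, even in $y$, with $v(X)\to 0$ as $|X|\to\infty$. Setting $u=p+v$ and checking each clause of \eqref{TOP} is straightforward, and the growth and compactness of $\Lambda(u)=\{v=-p\}\cap\HPP\subset K$ give $u\in\GC$.

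\textbf{Uniqueness of $v$ and injectivity.} For uniqueness, I would argue that if $v_1,v_2$ are two such $v$'s for the same $p$, then at a would-be positive interior maximum of $v_1-v_2$ we are in $\{v_1>-p\}$, so $\Delta v_1=0$ there, while $\Delta v_2\le 0$ gives $\Delta(v_1-v_2)\ge 0$ locally; the strong maximum principle together with $v_1-v_2\to 0$ at infinity then forces $v_1\le v_2$, and by symmetry equality. Injectivity of $p\mapsto u$ is immediate: if $p_1+v_1=p_2+v_2$, then the polynomial $p_1-p_2$ equals $v_2-v_1$, which tends to $0$ at infinity, so both vanish.

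\textbf{Surjectivity.} This is where I expect the main difficulty. Given $u\in\GC$, Lemma \ref{LemIdentifyDelta} shows that $\Delta u$ is a nonpositive measure supported in the compact set $\Lambda(u)$. Writing $\mu:=-\Delta u$, I would decompose
\begin{equation}
u = -V_\mu + G,
\end{equation}
where $V_\mu$ is the Newtonian potential of $\mu$ (which decays at infinity in dimension $d+1\ge 3$) and $G:=u+V_\mu$ is an entire harmonic function in $\R^{d+1}$. Since $u$ grows at most quadratically (use Corollary \ref{CorGrowthRateFromAlmgren}) and $V_\mu$ is bounded, $G$ is an entire harmonic function of at most quadratic growth, hence a harmonic polynomial of degree at most $2$ by the Liouville-type classification. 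Set $p:=G$ and $v:=-V_\mu$; then $u=p+v$, $v\to 0$ at infinity, and $v$ solves the stated obstacle problem with obstacle $-p$. Evenness in $y$ of $p$ follows from evenness of $u$ and of $\mu$ (since the latter is supported on $\HPP$).

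\textbf{Main obstacle: $p\in\PC$.} The remaining point, which I view as the crux, is to show $\{p(\cdot,0)\le 0\}$ is compact. Since $v\to 0$ at infinity and $u\ge 0$ on $\HPP$, one gets $\liminf_{|x|\to\infty}p(x,0)\ge 0$, so the quadratic form part of $p(\cdot,0)$ is positive semidefinite. To upgrade to positive definite, I would argue by contradiction: if $p(\cdot,0)$ had a direction $e\in\R^d$ along which its quadratic part vanished, then $p(\cdot,0)$ would be at most linear along $e$, which combined with the nonnegativity at infinity would force $p(\cdot,0)$ to be constant in the $e$ direction. Then $\partial_e u=\partial_e v\to 0$ at infinity while $\partial_e u$ is a bounded harmonic function in the upper half-space with vanishing Neumann trace on $\Lambda(u)$; a Liouville-type argument on $\partial_e u$, combined with the compactness of $\Lambda(u)$ and the nontriviality of $u$ (its frequency at infinity is positive since $\Lambda(u)\ne\emptyset$), would yield a contradiction. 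Once positive definiteness is secured, $p\in\PC$ and the bijection is complete.
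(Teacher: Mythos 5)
Your construction of $v$ as a limit of ball solutions, the uniqueness/injectivity via comparison at infinity, and the surjectivity decomposition $u=-V_\mu+G$ with $\mu=-\Delta u$ all track the paper's appendix proof in substance: the paper solves for $u_R$ with boundary data $p$ rather than for $v_R$ with data $0$, and invokes an exterior Liouville theorem (Li--Li--Yuan) rather than the Newtonian potential of $\Delta u$, but these are interchangeable in $\R^{d+1}$ with $d\ge 2$.

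The genuine gap is in the step you correctly flag as the crux, showing $\{p(\cdot,0)\le0\}$ is compact. Your degenerate-direction argument misstates the boundary conditions: $\partial_e u$ has vanishing Neumann trace on $\HPP\setminus\Lambda(u)$ (by evenness and interior regularity there) and a Dirichlet condition $\partial_e u=0$ on the interior of $\Lambda(u)$, but the Neumann trace of $\partial_e u$ \emph{on} $\Lambda(u)$ is generally nonzero — it is precisely what carries the derivative of the measure $\Delta u$ along $e$. With the corrected mixed conditions, the ``Liouville-type argument'' is not routine: a bounded function that is harmonic off the compact set $\Lambda(u)$, vanishes at infinity, and satisfies these mixed conditions could still a priori be a nontrivial single-layer potential supported on $\Lambda(u)$, so one cannot conclude $\partial_e u\equiv0$ without more. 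The paper closes this cleanly through the blow-down: Almgren monotonicity produces a $\lambda_\infty$-homogeneous limit $u_\infty$ with $\Lambda(u_\infty)=\{0\}$; Garofalo--Petrosyan's classification then forces $u_\infty$ to be a homogeneous harmonic polynomial of even degree, and the quadratic growth pins it to degree $2$. Since $u_\infty\ge0$ on $\HPP$ and vanishes there only at the origin, it is strictly positive on $\HPP\setminus\{0\}$, and $u_\infty$ is a positive multiple of the degree-$2$ part of $p$ (the blow-downs of $u$ and $p$ agree because $u-p\to0$). This gives the compactness of $\{p(\cdot,0)\le0\}$ directly, without the delicate analysis of $\partial_e u$. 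I would replace your contradiction argument by this blow-down step.
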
 
\begin{rem}
This theorem is established for operators of the form $\mathrm{div}(|y|^{a}\nabla\cdot)$ in \cite{ERW}. Such operators become very singular when the parameter $a$ is close to $-1$. To deal with this singularity, the authors have to restrict the dimension $d$ to be $d\ge3$.

For our purpose, we generalize their result to $d\ge2$. To avoid diluting the focus, we leave the proof to the appendix.
 \end{rem} 

\subsection{{Reduction to `normalized solutions'}}
\label{RemNormalizationOfBijection}
In the expansion $u=p+v$, 
 up to a rotation and a translation in the thin space $\HPP$, and  a multiplication by a positive number, we might assume that $p$ is of the form
$$
p=\frac12\sum_{1\le j\le d}a_jx_j^2-\frac12 y^2-c
$$
for $a_j>0$, $\sum_{1\le j\le d}a_j=1$ and ${c \in \R}$.
{If $c < 0$, then $\LSet$ is the empty set.}
If $c=0$, then $\LSet$ is a single point. 

If $c>0$, we consider quadratic rescalings $u_r(x,y):=\frac{1}{r^2}u(rx,ry)$, which have expansions of the form
$$
u_r=\frac12\sum_{1\le j\le d}a_jx_j^2-\frac12 y^2-\frac{c}{r^2}+\frac{1}{r^2}v(rx,ry).
$$
As a result, by picking $r=\sqrt{c}$, we only need to consider solutions with the expansion
$$
u=(\frac12\sum_{1\le j\le d}a_jx_j^2-\frac12 y^2-1)+v,
$$
where
$a_j>0, \text{ and } \sum_{1\le j\le d}a_j=1.$

\subsection{Solutions to the classical obstacle problem}
 To simplify our exposition,  if $U$ is a global solution to the classical obstacle problem with a compact contact set, namely, 
$$
\begin{cases}
\Delta U=\chi_{\{U>0\}} &\text{ in $\R^{d+1}$,}\\
U\ge0 &\text{ in $\R^{d+1}$,}\\
\{U=0\}\neq\emptyset\hem \text{ is compact,}
\end{cases}
$$
then we write 
\begin{equation}
\label{EqnSC}
U\in\mathcal{S}_c.
\end{equation} 

One fundamental ingredient  is the convexity of {global} solutions:
\begin{prop}[{\cite[Corollary 7]{C}}]
\label{PropObConvexity}
Suppose that $U$ is a global solution to the obstacle problem \eqref{OP}, then
$$
D^2u\ge0.
$$
\end{prop}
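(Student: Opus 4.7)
The plan is to fix an arbitrary unit vector $\xi\in\R^{d+1}$ and prove $U_{\xi\xi}\ge 0$ everywhere; since $\xi$ is arbitrary this gives $D^2 U\ge 0$. By the standard $C^{1,1}_{\mathrm{loc}}$ regularity for the obstacle problem, $w:=U_{\xi\xi}$ is a bounded measurable function which is harmonic in the non-coincidence set $\Omega:=\{U>0\}$ (by differentiating $\Delta U=1$ twice in $\xi$), and which vanishes identically in the interior of the contact set $\{U=0\}$.

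The argument then proceeds through the second-difference quotient. For $h>0$ set
$$D_h(x):=U(x+h\xi)+U(x-h\xi)-2U(x),$$
so that convexity in direction $\xi$ is equivalent to $D_h\ge 0$ for all $h>0$. At any point $x_0$ where $D_h(x_0)<0$, the non-negativity $U\ge 0$ forces $U(x_0)>\tfrac{1}{2}(U(x_0+h\xi)+U(x_0-h\xi))\ge 0$, hence $x_0\in\Omega$ and $\chi_\Omega(x_0)=1$. Consequently,
$$\Delta D_h(x_0)=\chi_\Omega(x_0+h\xi)+\chi_\Omega(x_0-h\xi)-2\le 0,$$
so $D_h$ is superharmonic on the open set $\{D_h<0\}$. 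Equivalently, the non-negative function $v:=(D_h)_-$ is globally subharmonic on $\R^{d+1}$ in the distributional sense and is bounded uniformly (by the global $C^{1,1}$ bound, which is available since the hypotheses imply $U$ has at most quadratic growth).

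The main obstacle will be to conclude $v\equiv 0$, because Liouville for bounded subharmonic functions fails in dimension $\ge 3$. I would handle this by blow-down: the rescalings $U_R(x):=R^{-2}U(Rx)$ converge along a subsequence $R\to\infty$, locally uniformly away from the origin, to a convex quadratic polynomial $P(x)=\tfrac12\sum_i a_i x_i^2$ with $a_i\ge 0$ and $\sum_i a_i=1$, by the classical classification of global blow-down profiles for the obstacle problem. A direct Taylor expansion then yields $D_h(Rx)\to h^2 P_{\xi\xi}(x)\ge 0$ locally uniformly in $x\neq 0$, so $v(y)\to 0$ as $|y|\to\infty$. The sub-mean-value inequality $v(x_0)\le\fint_{B_R(x_0)} v$, combined with the uniform bound $\|v\|_\infty\le Ch^2$ and the decay at infinity, lets $R\to\infty$ to give $v(x_0)\le 0$, i.e., $v\equiv 0$. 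This shows $D_h\ge 0$ for every $h>0$, and dividing by $h^2$ and letting $h\to 0^+$ yields $U_{\xi\xi}\ge 0$, completing the proof.
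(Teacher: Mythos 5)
Your reduction to the second difference is fine as far as it goes: if $D_h(x_0)<0$ then indeed $U(x_0)>0$, so on the open set $\{D_h<0\}$ one has $\Delta D_h\le \chi_{\{U>0\}}(\cdot+h\xi)+\chi_{\{U>0\}}(\cdot-h\xi)-2\le 0$, and $v=(D_h)_-$ is a bounded, globally subharmonic function; if you knew $v\to 0$ at infinity, the sub-mean-value argument would force $v\equiv 0$. The genuine gap is the decay claim. Writing $D_h(Rx)=R^2\bigl[U_R(x+\tfrac{h}{R}\xi)+U_R(x-\tfrac{h}{R}\xi)-2U_R(x)\bigr]$ with $U_R(x)=R^{-2}U(Rx)$ shows that you are asking for second-derivative information on $U_R$ at the point $x$, while the blow-down convergence $U_R\to P$ is only locally uniform (at best $C^{1,\alpha}_{loc}$): it controls $U$ near $Rx$ up to errors $o(R^2)$, which says nothing about a fixed-scale quantity of size $O(h^2)$; there is no ``direct Taylor expansion'' available because $D^2U_R$ does not converge pointwise near the free boundary of the limit. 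One can rescue the claim in directions $x$ where $P(x)>0$ (there $U_R-P$ is harmonic for large $R$ and interior estimates upgrade to $C^2$ convergence), but nothing is obtained in directions where $P$ vanishes on the unit sphere. Moreover your classification of blow-downs is inaccurate for the statement as quoted, which does not assume $\{U=0\}$ compact: the blow-down of a global solution may be a half-space solution $\tfrac12\,(x\cdot e)_+^2$, which vanishes on a whole half-space; along those directions the free boundary of $U$ escapes to infinity, the points $y\to\infty$ may sit on or near it, and proving $U_{\xi\xi}\ge -o(1)$ there is precisely the hard content of the result. In other words, the step you delegate to the blow-down is where Caffarelli's actual proof does its work (a quantitative semiconvexity estimate near free boundary points combined with scaling); note the paper itself does not prove this proposition but quotes it from \cite{C}.

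For the case the paper actually uses ($U\in\mathcal{S}_c$, i.e.\ compact, nonempty coincidence set) your scheme can be completed, but by a different mechanism than the one you invoke: outside a large ball $\Delta U=1$, so $U$ minus an explicit quadratic is harmonic with quadratic growth in an exterior domain, hence equals a quadratic polynomial plus a tail that decays together with its derivatives; since $U\ge 0$, the leading quadratic form is nonnegative, so $D^2U(y)$ converges to a nonnegative matrix as $|y|\to\infty$, which gives the uniform decay of $(D_h)_-$ and closes your maximum-principle argument. As written, however, and in the generality of the proposition (arbitrary global solutions), the decay of $(D_h)_-$ at infinity is an unproven step that cannot be extracted from locally uniform blow-down convergence, so the proof is incomplete exactly at the crux.
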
 
Without tools from potential theory, we construct solutions in $\GC$ as {linearization}  limits of elements in $\SC$. This requires {sufficient} richness of $\SC$ as in the following theorem \cite{EW}:
\begin{thm}[{\cite[Lemma 3]{EW}}]
\label{ThmRichnessOfSC}
\mbox{}\\
\begin{enumerate}[1)]
\item \label{item:existence_of_ellipsoidal_solutions_by_ellipsoid} Given $\ell_j>0$ for $j=1,2,\dots,d+1$, and the corresponding ellipsoid 
$$
E= \Big \{\sum_{1\le j\le d} \frac{x_j^2}{\ell_j^2}+ \frac{y^2}{\ell_{d+1}^2}  \le1 \Big \},
$$
there is $U\in\SC$ such that 
$$
\{U=0\}=E.
$$
\vem

\item \label{item:existence_of_ellipsoidal_solutions_by_poly}  Given a {homogeneous quadratic polynomial} $P$ of the form
$$
P=\frac12\sum_{1\le j\le d}a_jx_j^2+\frac12 a_{d+1}y^2
$$ 
with $a_j>0$ and $\sum_{{j=1}}^{{d+1}} a_j=1$,
there is an ellipsoid $E$, symmetric with respect to each $\{x_j=0\}$ and $\HPP$, such that 
$$
U=P-1+V_E\in\SC
$$
and $$\{U=0\}=E.$$ 

Here $V_E$ denotes the Newtonian potential of $E$ as in \eqref{DefNT}.
\end{enumerate}
\end{thm}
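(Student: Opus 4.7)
The plan is to use the classical Newton--Dive--Nikliborc theorem: the Newtonian potential of an ellipsoid agrees with a quadratic polynomial throughout the ellipsoid. Concretely, for an ellipsoid $E = E_{\ell}$ with semi-axes $\ell_1, \ldots, \ell_{d+1}$, there exist constants $A(\ell) > 0$ and $C_j(\ell) > 0$, with $\sum_{j=1}^{d+1} C_j(\ell) = \tfrac{1}{2}$ (forced by $\Delta V_E = -1$ inside $E$), such that
\begin{equation*}
V_E(X) = A(\ell) - \sum_{j=1}^{d} C_j(\ell)\, x_j^2 - C_{d+1}(\ell)\, y^2 \quad \text{for } X \in E.
\end{equation*}
Note that scaling $E \mapsto \lambda E$ sends $A \mapsto \lambda^2 A$ while leaving each $C_j$ unchanged, i.e., the $C_j$ are homogeneous of degree $0$ in $\ell$ and depend only on the ``shape'' of the ellipsoid.

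For part \ref{item:existence_of_ellipsoidal_solutions_by_ellipsoid}, given the ellipsoid $E$, extend $P_E(X) := A - \sum_j C_j x_j^2 - C_{d+1} y^2$ to all of $\R^{d+1}$ as a polynomial and set $U := V_E - P_E$. Then $U \equiv 0$ on $E$, while on $\R^{d+1} \setminus E$ we have $\Delta U = 0 - \Delta P_E = 1$, so $U$ is subharmonic there. Since $U = 0$ on $\partial E$ and $U(X) \sim \sum_j C_j x_j^2 + C_{d+1} y^2 \to +\infty$ at infinity, the minimum principle gives $U \ge 0$ on $\R^{d+1} \setminus E$. Thus $\Delta U = \chi_{\R^{d+1} \setminus E} = \chi_{\{U>0\}}$ a.e., $U \ge 0$ globally, and the contact set $\{U=0\} = E$ is compact, so $U \in \SC$.

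For part \ref{item:existence_of_ellipsoidal_solutions_by_poly}, given $P = \tfrac{1}{2}\sum_j a_j x_j^2 + \tfrac{1}{2} a_{d+1} y^2$, we seek $\ell$ with $C_j(\ell) = \tfrac{1}{2} a_j$ for all $j$ and $A(\ell) = 1$; then $U = P - 1 + V_E$ coincides inside $E$ with $P - 1 + P_E \equiv 0$ by the matching, and outside $E$ satisfies $\Delta U = \Delta P - 0 = 1$, reducing to the situation of part \ref{item:existence_of_ellipsoidal_solutions_by_ellipsoid}. Since $\sum C_j = \tfrac{1}{2} = \sum \tfrac{1}{2} a_j$ is automatic, the coefficient-matching conditions on the $C_j$ amount to $d$ independent equations on the $d$-dimensional space of shapes. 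Once a shape producing the correct ratios is found, we rescale by $\lambda = 1/\sqrt{A}$ to enforce $A = 1$.

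The main obstacle is therefore the shape-matching step: showing that the map $\Psi$ sending a shape to the tuple $(2C_j(\ell))_{j=1}^{d+1}$ surjects onto the open simplex of positive tuples summing to $1$. The map $\Psi$ is continuous and proper (degenerating an axis to zero sends the image to the boundary of the simplex); at the round ball one has $C_j = 1/(2(d+1))$, hitting the barycenter. Surjectivity then follows either from explicit formulas for $C_j$ via elliptic integrals together with monotonicity in the axis ratios, or from a Brouwer-degree argument based at the round ball. This combinatorial/topological inversion is the heart of the lemma; everything else is a direct consequence of the explicit polynomial formula for $V_E$ inside $E$.
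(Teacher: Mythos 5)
The paper does not prove Theorem \ref{ThmRichnessOfSC}; it cites \cite[Lemma 3]{EW}, and the subsequent remark points to the classical fact in \cite{Kellogg} that the Newtonian potential of an ellipsoid is a quadratic polynomial inside the ellipsoid. Your proposal follows exactly this classical route (Newton--Dive: $V_E = P_E$ on $E$, then ``complete'' to a global solution), so the architecture matches the cited source.

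However, there is a genuine error in the nonnegativity step for part \ref{item:existence_of_ellipsoidal_solutions_by_ellipsoid}. Outside $E$ you have $\Delta U = 1 > 0$, so $U$ is \emph{subharmonic}, not superharmonic; the minimum principle does not apply, and the boundary data $U=0$ on $\partial E$ together with $U\to+\infty$ at infinity do \emph{not} force $U\ge 0$. (A bounded subharmonic function attains its maximum on the boundary; its minimum can sit anywhere.) The nonnegativity of $V_E - P_E$ in the exterior is the real content of Dive's theorem and needs actual input, typically the explicit Ferrers/Dyson exterior representation
$V_E(X) = c\int_{\lambda(X)}^\infty \bigl(1 - \sum_j \tfrac{x_j^2}{\ell_j^2+s}\bigr)\,\tfrac{\dx{s}}{\sqrt{\prod_k(\ell_k^2+s)}}$,
where $\lambda(X)>0$ solves $\sum_j \tfrac{x_j^2}{\ell_j^2+\lambda}=1$; subtracting the interior formula (same integral with lower limit $0$) gives
$U(X) = -c\int_0^{\lambda(X)}\bigl(1-\sum_j\tfrac{x_j^2}{\ell_j^2+s}\bigr)\tfrac{\dx{s}}{\sqrt{\prod_k(\ell_k^2+s)}} \ge 0$,
since the integrand is $\le 0$ for $0\le s\le\lambda(X)$. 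Without some such argument, $U\in\SC$ is not established. Separately, for part \ref{item:existence_of_ellipsoidal_solutions_by_poly} you correctly reduce to surjectivity of the shape-to-coefficients map onto the open simplex, but you only sketch this (degree theory or monotonicity of elliptic integrals); that inversion is again the substantive content of \cite[Lemma 3]{EW} and would need to be carried out to have a complete proof.
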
 
\begin{rem}
While Theorem \ref{ThmRichnessOfSC} \ref{item:existence_of_ellipsoidal_solutions_by_poly}) is exactly as stated in \cite[Lemma 3]{EW}, Theorem \ref{ThmRichnessOfSC} \ref{item:existence_of_ellipsoidal_solutions_by_ellipsoid}) is more classical. It is an old fact in potential theory that the Newton potential of an ellipsoid is a quadratic polynomial on the ellipsoid  \cite[§9, p 22]{Kellogg}. This potential of an ellipsoid can then be `completed' to a global solution to the obstacle problem as done in the proof of \cite[Lemma 3]{EW}.
\end{rem}

To get compactness of linearizations of the form $\frac{U-y^2/2}{\| U-y^2/2 \|_{L^2(\partial B_1)}}$ for $U\in\mathcal{S}_c$, we need the monotonicity formula by Weiss \cite{W}. For a function $U$, a point ${X}\in\R^{d+1}$ and a scale $r>0$, the \textit{ Weiss balanced energy} is defined as
\begin{equation}
\label{EqnWeiss}
\cW(U;{X},r):=\frac{1}{r^{d+3}}\int_{B_r({X})}(|\nabla U|^2+2U)-\frac{2}{r^{d+4}}\int_{\partial B_r({X})}U^2 {\dH{d}}.
\end{equation} 
When the center is the origin, we omit the dependence on the center from the expression.

For solutions in $\SC$, we have the following:
\begin{prop}[{\cite[Theorem 2]{W}}]
\label{PropWeiss}
Suppose $U\in\SC$, then 
$$
\frac{\dx{}}{\dx{r}} \cW(U;r)\ge0.
$$
The equality happens if and only if $U$ is $2$-homogeneous. 

Moreover, we have
$$
\lim_{r\to\infty}\cW(U;r)= \alpha_d \hem\text{ for  $U\in\SC$}
$$
where
$$\alpha_d:=\cW(\tfrac12y^2;0,1) \text{ is a dimensional constant}.$$ 
\end{prop}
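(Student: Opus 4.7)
\emph{Monotonicity.} The plan is to follow the standard route, working with the $2$-homogeneous rescaling $U_r(X) := r^{-2}U(rX)$, which satisfies $\cW(U;0,r) = \cW(U_r;0,1)$ by a direct change of variables. Set $\dot U_r := \tfrac{\dx{}}{\dx{r}} U_r = r^{-1}(X\cdot\nabla U_r - 2 U_r)$, differentiate in $r$, and integrate by parts on $B_1$ using the obstacle equation $\Delta U_r = \chi_{\{U_r > 0\}}$. The interior contribution reduces to
\begin{equation*}
2\int_{\partial B_1}(\partial_\nu U_r)\dot U_r \dH{d} + 2\int_{B_1 \cap \{U_r = 0\}}\dot U_r,
\end{equation*}
and the last integral vanishes because on the minimum set $\{U_r = 0\}$ one has both $U_r = 0$ and $\nabla U_r = 0$, so $\dot U_r = 0$ there. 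Using $\partial_\nu U_r = X\cdot\nabla U_r$ on $\partial B_1$ and collecting boundary terms gives
\begin{equation*}
\tfrac{\dx{}}{\dx{r}} \cW(U;0,r) = \tfrac{2}{r} \int_{\partial B_1}(X\cdot\nabla U_r - 2U_r)^2 \dH{d},
\end{equation*}
which is non-negative and vanishes precisely when $U_r$ satisfies Euler's identity on $\partial B_1$ for every $r$, i.e., when $U$ is $2$-homogeneous.

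\emph{Existence of the limit and a blow-down.} At most quadratic growth of $U\in\SC$ makes $\cW(U;0,r)$ uniformly bounded above, so monotonicity yields existence of $L := \lim_{r\to\infty}\cW(U;0,r)$. Extracting a subsequence $r_k\to\infty$, standard compactness for the obstacle problem produces a locally uniform limit $U_{r_k}\to U_\infty$, with $U_\infty$ a global solution of \eqref{OP}. Passing to the limit in Weiss's formula,
\begin{equation*}
\cW(U_\infty;0,s) = \lim_k \cW(U; 0, r_k s) = L \quad\text{for every } s > 0,
\end{equation*}
so by the equality case $U_\infty$ is $2$-homogeneous. Since $\{U = 0\}$ is compact, the rescaled contact sets $r_k^{-1}\{U = 0\}$ shrink to $\{0\}$; this rules out a half-space blow-down and forces $U_\infty$ to be of polynomial type, namely $U_\infty = \tfrac12 X^T A X$ for some positive semidefinite $A$ with $\mathrm{tr}(A) = 1$.

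\emph{Identifying $L$.} The hardest part is showing $L$ equals the dimensional constant $\alpha_d$, independently of which polynomial $U_\infty$ is attained. The key identity is that for any $2$-homogeneous obstacle solution, Euler's formula together with integration by parts and $U_\infty\,\Delta U_\infty = U_\infty$ yields
\begin{equation*}
\cW(U_\infty;0,1) = \int_{B_1} U_\infty.
\end{equation*}
Plugging in $U_\infty = \tfrac12 X^T A X$ and using the symmetry of $B_1$ reduces the right-hand side to $\tfrac{\mathrm{tr}(A)}{2(d+3)}|B_1| = \tfrac{|B_1|}{2(d+3)}$, which is independent of $A$. The same computation applied to $\tfrac12 y^2$ gives the identical value, so $L = \alpha_d$ as asserted.
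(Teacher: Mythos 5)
The paper does not prove this proposition; it is cited directly from Weiss \cite{W} (for the monotonicity and rigidity) together with a one-line remark referring to \cite[Lemma 3.31]{PSU} for the limit, noting that all blow-downs of a solution with compact contact set are polynomials with the same balanced energy. Your self-contained argument is correct and reproduces the standard Weiss route faithfully: the rescaling $U_r$, integration by parts against $\dot U_r$ using $\Delta U_r = \chi_{\{U_r>0\}}$, the vanishing of the bulk term on $\{U_r=0\}$ via $\nabla U_r = 0$ there, and the resulting square on $\partial B_1$ are exactly Weiss's computation; the blow-down step and the identity $\cW(U_\infty;0,1) = \int_{B_1}U_\infty$ with the trace computation are the content of the \cite{PSU} lemma the paper invokes. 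One small point worth making explicit: the phrase ``the rescaled contact sets shrink to $\{0\}$, which rules out a half-space blow-down'' is best justified by observing that $\Delta U_{r_k} \equiv 1$ on $B_1\setminus B_{R_0/r_k}$, so in the limit $\Delta U_\infty \equiv 1$ away from the origin and hence everywhere by removability, which excludes the half-space profile $\tfrac12(X\cdot e)_+^2$ whose Laplacian is $\chi_{\{X\cdot e>0\}}$.
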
 
For the last convergence, we note that for a solution with a compact contact set, all its blow-downs are polynomial solutions, which have the same balanced energy $\alpha_d$. See, for instance, Lemma 3.31 in \cite{PSU}.

\section{Solutions with prescribed contact sets} 
With our decomposition of the space $\R^{d+1}$ as in \eqref{EqnDecomposition}, the thin obstacle problem arises as the linearization of the classical obstacle problem around $y^2/2$ \cite{FiSe, SY3}. Let ${E'}$ denote a given thin ellipsoid in the hyperplane $\HPP$. To construct a solution $u$ to the thin obstacle problem with contact set $\LSet=E'$, it is natural to begin with a solution $U$ to the classical obstacle problem with $\{U=0\}=E$, where $E$ is a `thickening' of ${E'}$.

For a sequence of such solutions $U_n$, corresponding to ellipsoids $E_n$ with vanishing thickness,  the desired $u$ arises as the limit of normalized versions of $U_n-y^2/2$. To this end, we need the compactness of this family, which begins with the following:

\begin{lem}
\label{LemFrequencyBound}
For $U\in\SC$ and 
$$
W:=U- \tfrac{y^2}{2},
$$
we have
\begin{equation}
\label{EqnFrequencyBound}
R\int_{B_R}|\nabla W|^2\le 2\int_{\partial B_R}W^2 \dH{d} \qquad \text{ for all $R>0$}.
\end{equation}
If we further assume that
$$
\OLSet\subset B_{1/2},
$$
then we have
\begin{equation}
\label{EqnDoubling}
\frac{
\int_{\partial B_R}W^2 \dH{d}}{\int_{\partial B_1}W^2 \dH{d}} \le R^{\mu_d} \qquad \text{  for $R\ge 1$}
\end{equation}
where the constant $\mu_d$ is dimensional.
\end{lem}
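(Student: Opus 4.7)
I plan to establish \eqref{EqnFrequencyBound} by comparing Weiss's balanced energy of $U$ with that of $\tfrac{y^2}{2}$, and then to deduce \eqref{EqnDoubling} from a differential inequality for $H(R):=\int_{\partial B_R} W^2\,\dH{d}$ obtained from \eqref{EqnFrequencyBound} and the obstacle equation for $W$.

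For \eqref{EqnFrequencyBound}, I would substitute $U = W + \tfrac{y^2}{2}$ into the Weiss energy \eqref{EqnWeiss}, expand the squares, and integrate by parts the mixed term via
\[
\int_{B_r} 2y\, W_y \;=\; \frac{2}{r}\int_{\partial B_r} y^2 W\,\dH{d} \;-\; 2\int_{B_r} W.
\]
After the $y^2 W$ boundary contributions cancel, one arrives at the clean identity
\[
\cW(U;r) - \cW\bigl(\tfrac{y^2}{2};0,r\bigr) \;=\; \frac{1}{r^{d+3}}\int_{B_r}|\nabla W|^2 \;-\; \frac{2}{r^{d+4}}\int_{\partial B_r} W^2\,\dH{d}.
\]
Since $\tfrac{y^2}{2}$ is $2$-homogeneous, $\cW(\tfrac{y^2}{2};0,r)=\alpha_d$ is independent of $r$, while Proposition~\ref{PropWeiss} ensures $\cW(U;r)\le\alpha_d$ for all $r>0$. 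Hence the right side of the identity is $\le 0$, which is exactly \eqref{EqnFrequencyBound}.

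For \eqref{EqnDoubling}, I would differentiate $H$ to get $H'(R) = (d/R)H(R) + 2\int_{\partial B_R} W W_\nu\,\dH{d}$. Integration by parts, combined with $\Delta W = -\chi_{\OLSet}$ and $W = -\tfrac{y^2}{2}$ on $\OLSet$, gives
\[
\int_{\partial B_R} W W_\nu\,\dH{d} \;=\; \int_{B_R}|\nabla W|^2 \;+\; \tfrac{1}{2}\int_{B_R\cap\OLSet} y^2.
\]
Plugging in \eqref{EqnFrequencyBound} and using $\OLSet\subset B_{1/2}$ (which yields a dimensional bound $\int_{\OLSet} y^2 \le C_d$) produces the differential inequality
\[
\frac{H'(R)}{H(R)} \;\le\; \frac{d+4}{R} \;+\; \frac{C_d}{H(R)}, \qquad R\ge 1,
\]
which, after integration from $1$ to $R$, leads to a polynomial-growth bound on $H(R)/H(1)$.

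The main technical obstacle will be to convert that last step into a bound with a purely dimensional exponent $\mu_d$: naive integration leaves an additive constant that a priori depends on $H(1)$. I plan to resolve this via the Liouville-type decomposition $W = V_{\OLSet} + Q$, where $Q$ is a harmonic polynomial of degree at most $2$ (valid because $W - V_{\OLSet}$ is harmonic on $\R^{d+1}$ with at most quadratic growth). This structural information controls the ratio $\int_{\OLSet} y^2/H(R)$ uniformly across the family $\{U\in\SC:\OLSet\subset B_{1/2}\}$, so that the constant can be absorbed into $R^{\mu_d}$ for a dimensional $\mu_d$.
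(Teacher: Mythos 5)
Your derivation of \eqref{EqnFrequencyBound} matches the paper's argument exactly: substitute $U=W+\tfrac{y^2}{2}$ into the Weiss energy, integrate the cross term by parts, and invoke $\cW(U;R)\le\lim_{r\to\infty}\cW(U;r)=\alpha_d=\cW(\tfrac{y^2}{2};r)$. That part is fine.

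Your plan for \eqref{EqnDoubling}, however, has a genuine gap, and you already sense it. The identity
\[
\int_{\partial B_R}WW_\nu\,\dH{d}=\int_{B_R}|\nabla W|^2+\tfrac12\int_{B_R\cap\OLSet}y^2
\]
and the crude bound $\int_{\OLSet}y^2\le C_d$ lead to
\[
\frac{H'(R)}{H(R)}\le\frac{d+4}{R}+\frac{C_d}{H(R)},
\]
and the second term is the problem. It is \emph{not} scale-invariant: $C_d/H(R)$ is in general only $O(1)$, which integrates to a linear term in $R$ and hence an \emph{exponential} bound, not a power. The Liouville-decomposition repair you sketch would require a uniform lower bound of the form $\int_{\OLSet}y^2\le C_d\int_{\partial B_1}W^2\,\dH{d}$ over the whole family $\{U\in\SC:\OLSet\subset B_{1/2}\}$; this is delicate precisely because both sides degenerate simultaneously along the thin-ellipsoid families used later in the paper (Lemma~\ref{LemMainLemma}, where $\ell_{y,n}\to0$ and $\|\UnMY\|_{L^2(\partial B_1)}\to0$). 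Subharmonicity of $W^2$ only gives $\int_{\OLSet}y^2\lesssim\sqrt{H(1)}$, not $\lesssim H(1)$, so it is not at all clear the proposed decomposition closes the gap; and even if $\int_{\OLSet}y^2\lesssim H(1)$, you would still need $H(R)\ge R^d H(1)$ (subharmonicity again) to make $C_dH(1)/H(R)$ integrable, a step you do not mention.

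The paper avoids all of this with a cleaner, scale-invariant estimate for the cross term: since $\Delta(W^2)\ge 2W\Delta W$ and $W\Delta W$ is supported in $B_{1/2}$, a cut-off $\eta$ equal to $1$ on $B_{1/2}$, supported in $B_R$, with $\Delta\eta\le C/R^2$ gives
\[
\int_{B_R}W\Delta W=\int_{B_{1/2}}W\Delta W\le\tfrac12\int W^2\Delta\eta\le\frac{C}{R^2}\int_{B_R}W^2\le\frac{C}{R}\int_{\partial B_R}W^2\,\dH{d},
\]
the last step by the mean-value inequality for the subharmonic $W^2$. Dividing by $\int_{\partial B_R}W^2$ then produces $C/R$ on the nose, which integrates to a logarithm and yields the dimensional power $R^{\mu_d}$ directly, with no need for any a priori control of $H(1)$ or the structure of $W$. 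Replacing your constant bound on $\int_{\OLSet}y^2$ by this cut-off argument is the missing ingredient.
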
 
Recall the class of solutions $\SC$ from \eqref{EqnSC}, as well as the Weiss balanced energy from \eqref{EqnWeiss}.

\begin{proof}
	\mbox{}\\
\textit{Step 1: The frequency bound \eqref{EqnFrequencyBound}.}

With Proposition \ref{PropWeiss}, we have
\begin{align*}
0&=\lim_{r\to+\infty}\cW(U;r)-\cW(\tfrac{y^2}{2};1)
\ge \cW(U;R)-\cW(\tfrac{y^2}{2};R),
\end{align*}
where we used the constancy of $\cW$ in the $r$-variable at the $2$-homogeneous function $\tfrac{y^2}{2}$.
\\
Now we expand the last quantity as 
\begin{align*}
\cW(U;R)-\cW(\tfrac{y^2}{2};R)&=\frac{1}{R^{d+3}}\int_{B_R}|\nabla(U-\tfrac{y^2}{2})|^2+\nabla(y^2)\cdot\nabla(U-\tfrac{y^2}{2})\\
+&\frac{1}{R^{d+3}}\int_{B_R}2(U-\tfrac{y^2}{2})-\tfrac{2}{R^{d+4}}\int_{\partial B_R}(\UMY)^2+y^2(\UMY) \dH{d}.
\end{align*}
Integration-by-parts gives
$$
\int_{B_R}\nabla(y^2)\cdot\nabla(U-\tfrac{y^2}{2})=\int_{\partial B_R}\nabla(y^2)\cdot \frac{y}{R}(\UMY) \dH{d}-2\int_{B_R}(\UMY).
$$
Thus we have
$$
\cW(U;R)-\cW(\tfrac{y^2}{2};R)=\frac{1}{R^{d+3}}\int_{B_R}|\nabla(U-\tfrac{y^2}{2})|^2-\frac{2}{R^{d+4}}\int_{\partial B_R}(\UMY)^2 \dH{d}.
$$
Together with the definition of the function $W$, we have
$$
0\ge \cW(U;R)-\cW(\tfrac{y^2}{2};R)=\frac{1}{R^{d+3}}\int_{B_R}|\nabla W|^2-\frac{2}{R^{d+4}}\int_{\partial B_R}W^2 \dH{d}.
$$
This is the frequency bound \eqref{EqnFrequencyBound}. Similar observations have been made in \cite{M, FiSe}.
\\ \\
\textit{Step 2: The doubling estimate \eqref{EqnDoubling}.}

By definition of $W$ and \eqref{OP}, we have 
$$
W\Delta W=\frac{y^2}{2}\chi_{\OLSet},
$$
which is supported in $B_{1/2}$ by our assumption. Meanwhile, a direct computation gives
$$
\Delta(W^2)=2W\Delta W+2|\nabla W|^2\ge2W\Delta W.
$$

Take a cut-off function $\eta$ that is $1$ in $B_{1/2}$ and supported on a compact subset of  $B_R$, we have
\begin{align*}
\int_{B_R}W\Delta W&=\int_{B_{1/2}}W\Delta W\\
&\le\frac12\int_{B_{1/2}}\Delta(W^2)\\
&\le\frac12\int_{\R^{d+1}}\eta\Delta(W^2)\\
&=\frac12\int_{\R^{d+1}}W^2\Delta\eta.
\end{align*}
The cut-off function can be chosen such that $\Delta\eta\le \frac{C}{R^2}$ for a dimensional constant $C$. As a result, we have
$$
\int_{B_R}W\Delta W\le \frac{C}{R^2}\int_{B_R}W^2.
$$
With $W^2$ being subharmonic, the mean value property gives 
$
\int_{B_R}W^2\le CR\int_{\partial B_R}W^2 \dH{d},
$
leading to 
\begin{equation}
\label{EqnRatioBetweenCrossTermAndHeight}
\int_{B_R}W\Delta W\le \frac{CR}{R^2}\int_{\partial B_R}W^2 \dH{d}.
\end{equation}
for a dimensional constant $C$.
\\
Define $H(r):=\frac{1}{r^{d+4}}\int_{\partial B_r}W^2 \dH{d}$ and recall the Almgren frequency function as in \eqref{EqnAlmgren}, we use \eqref{EqnDOfHeight} with $\lambda_\infty=2$ to get
$$
\frac{\frac{\dx{}}{\dx{r}}H(r)}{H(r)}=\frac{2}{r}[\Phi(W;r)-2]+\frac{2\int_{B_r}W\Delta W}{\int_{\partial B_r}W^2 \dH{d}}\le \frac{2\int_{B_r}W\Delta W}{\int_{\partial B_r}W^2 \dH{d}},
$$
since $\Phi(W;r)\le 2$ by \eqref{EqnFrequencyBound}.
With \eqref{EqnRatioBetweenCrossTermAndHeight}, we get
$$
\frac{\frac{\dx{}}{\dx{r}}H(r)}{H(r)}\le \frac{Cr}{r^2} \text{ for $r\ge1.$}
$$
Integrating this relation gives the desired estimate \eqref{EqnDoubling}. 
\end{proof} 
This leads to the desired compactness:
\begin{cor}
\label{CorGradientBound}
For $U \in \SC$ and $W := U - \tfrac{y^2}{2}$with 
$$
\OLSet\subset B_{1/2},
$$
define 
$$
\hW:=   \frac{W}{\|W\|_{L^2(\partial B_1)}}.
$$
Then for each $R\ge 1$, there is a constant $C_R$, depending only on $R$ and the dimension $d$, such that 
$$
|\nabla\hW|+|\hW|\le C_R \text{ in $B_R$.}
$$
\end{cor}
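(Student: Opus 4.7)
The plan is to upgrade the integral bounds of Lemma~\ref{LemFrequencyBound} to the stated pointwise bounds. Set $K := \|W\|_{L^2(\partial B_1)}$ and divide both estimates in Lemma~\ref{LemFrequencyBound} through by $K^2$ to obtain, for all $R \ge 1$,
\begin{equation*}
\int_{\partial B_R} \hW^2 \dH{d} \le R^{\mu_d}, \qquad R \int_{B_R} |\nabla \hW|^2 \le 2 R^{\mu_d}.
\end{equation*}
The key first observation is that $\hW^2$ is subharmonic on $\R^{d+1}$. Writing $\Delta(\hW^2) = 2|\nabla \hW|^2 + 2\hW \Delta \hW$ and noting that on the support $\OLSet$ of $\Delta \hW$ we have $\hW = -y^2/(2K)$ while $\Delta \hW = -K^{-1}\chi_{\OLSet}$, we obtain $\hW \Delta \hW = (y^2/(2K^2))\chi_{\OLSet} \ge 0$. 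This is the same sign computation used in Step~2 of the proof of Lemma~\ref{LemFrequencyBound}.

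Using the subharmonic mean value inequality, for $X \in B_R$ we then get $|\hW(X)|^2 \le \fint_{B_1(X)} \hW^2 \le C_d \int_{B_{R+1}} \hW^2$. The $L^2$-integral over $B_{R+1}$ is controlled by the two displayed spherical bounds (for $s \ge 1$ directly, and for $s \le 1$ using that $s \mapsto \fint_{\partial B_s} \hW^2$ is monotone, itself a consequence of subharmonicity), which yields the $L^\infty$-bound $|\hW| \le C_R$ in $B_R$.

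For the gradient bound, I would split by distance to $\overline{B_{1/2}}$. In $\R^{d+1}\setminus \overline{B_{1/2}}$ the function $\hW$ is harmonic, since $\Delta \hW$ is supported in $\OLSet \subset B_{1/2}$. Therefore interior gradient estimates for harmonic functions, applied on balls of radius comparable to $\min\{d(X, \overline{B_{1/2}}), 1\}$ and combined with the $L^\infty$-bound from the previous step, deliver the desired pointwise control of $|\nabla \hW|$ away from $\overline{B_{1/2}}$. For $X \in \overline{B_{1/2}}$, I would rely on the global $C^{1,1}$-bound on $W$ afforded by Proposition~\ref{PropObConvexity} together with the inequality $\Delta U \le 1$, which force $0 \le D^2 U \le I$ and hence $|D^2 W| \le C_d$ pointwise, combined with an averaging argument based on the $L^2$-gradient estimate above, roughly of the form $|\nabla \hW(X)|^2 \lesssim \fint_{B_r(X)} |\nabla \hW|^2 + r^2 \|D^2 \hW\|_{L^\infty}^2$. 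The main obstacle is this last step: the normalization inserts a factor $K^{-1}$ into $\|D^2 \hW\|_{L^\infty}$, so producing a bound that depends only on $R$ and $d$ requires carefully exploiting how the $L^\infty$-bound on $\hW$ constrains the $y$-thickness of $\OLSet$ relative to $K$, in a way that is compatible with the normalization.
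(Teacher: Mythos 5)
Your $L^\infty$ bound is correct and is essentially the paper's argument: the paper uses that $|W|$ is subharmonic (via $\Delta W_\pm \ge 0$, which holds because both $U$ and $\tfrac{y^2}{2}$ solve the obstacle problem), while you use that $W^2$ is subharmonic; in both cases the mean-value inequality together with the doubling estimate of Lemma~\ref{LemFrequencyBound} gives the sup bound.

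For the gradient bound, however, you have correctly diagnosed a genuine gap in your own approach, and it is fatal as written. Interior harmonic estimates only give the claimed bound at distance bounded below from $\OLSet$. Near the contact set, the pointwise $C^{1,1}$ bound you invoke is $\|D^2 W\|_{L^\infty} \le C_d$ (from $0 \le D^2 U \le I$), but after dividing by $K := \|W\|_{L^2(\partial B_1)}$ this becomes $\|D^2 \hW\|_{L^\infty} \le C_d/K$, and there is no lower bound on $K$ over the class under consideration (indeed, in the application in Lemma~\ref{LemMainLemma} one has $K \to 0$ along the approximating sequence). No choice of averaging radius $r$ in your proposed inequality can then make both terms uniformly bounded in terms of $R$ and $d$ only.

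The paper avoids this by exploiting one-sided second-derivative inequalities that are invariant under the normalization $W \mapsto \hW = W/K$: Proposition~\ref{PropObConvexity} gives $D^2 U \ge 0$, hence $\hW_{\tau\tau} \ge 0$ for every tangential direction $\tau$ in $\HPP$; and $\Delta\hW \le 0$ together with tangential convexity gives $\hW_{yy} \le -\sum_j \hW_{x_j x_j} \le 0$. A function that is convex (resp.\ concave) in a fixed direction and bounded by $C_R$ on $B_{3R/2}$ has its derivative in that direction bounded by $CC_R/R$ on $B_R$. This works uniformly across the contact set with no dependence on $K$, and is the key ingredient missing from your argument.
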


\begin{proof}
Since both $U$ and $\frac{y^2}{2}$ solve the obstacle problem \eqref{OP}, we have
$$
\Delta W_+\ge0 \hem \text{ and }\hem\Delta W_-\ge 0.
$$
Combined with 
$$
\int_{\partial B_{2R}}W^2  {\dH{d}} \le (2R)^{\mu_d} \|W\|_{L^2(\partial B_1)}^2
$$
from Lemma \ref{LemFrequencyBound},
we have
$$
|W|\le C_R \|W\|_{L^2(\partial B_1)} \text{ in $B_{\frac32R}$}
$$
for a constant $C_R$, depending only on $R$ and $d$. 
This is the desired $L^\infty$-bound. 

For a unit vector $\tau$ in $\HPP$, Proposition \ref{PropObConvexity} implies the pure second derivative of $\hW$ satisfies
$$
\hW_{\tau\tau}\ge0.
$$
Together with the $L^\infty$-bound on $\hW$ in $B_{\frac32 R}$, this implies
\begin{equation}
\label{EqnEstimateTangentialDerivative}
\hW_\tau\le C_R \text{ in $B_R$}
\end{equation}
for a constant $C_R$ depending only on $R$ and $d$. 

On the other hand, with $\Delta(\UMY)\le0$, we have
$
\Delta \hW\le0.
$
Thus
$$
\hW_{yy}\le-\sum_{1\le j\le d}\hW_{x_jx_j}\le 0.
$$
This implies
\begin{equation}
\label{EqnEstimateNormalDerivative}
\hW_y\ge -C_R \text{ in $B_R$.}
\end{equation}
Combining \eqref{EqnEstimateTangentialDerivative} and \eqref{EqnEstimateNormalDerivative} with the symmetry of the problem, we have
$$
|\nabla\hW|\le C_R \text{ in $B_R$}
$$
for a constant $C_R$ depending only on $R$ and $d$. 
\end{proof} 

With these preparations, we state the main lemma in the construction of global solutions to the thin obstacle problem \eqref{TOP}:
\begin{lem}
\label{LemMainLemma}
Given a sequence $(\ell_{j,n})_{n\in\mathbb{N}}$ for each $j=1,2,\dots,d,$  and a sequence $(\ell_{y,n})_{n \in \N}$ such that 
$$
0<\ell_{j,n}<\frac12, \hem \ell_{j,n}\to \ell_j>0, \text{ and } \hem 0<\ell_{y,n}\to0,
$$
let $E_n$ denote the corresponding ellipsoids
$$
E_n:= \Big\{\sum_{1\le j\le d} \frac{x_j^2}{\ell_{j,n}^2}+\frac{y^2}{\ell_{y,n}^2}\le1 \Big\}.
$$
For $U_n\in\SC$ satisfying 
$$
\OLSetn=E_n,
$$
let 
$$
\hW_n:=\frac{\UnMY}{\|\UnMY\|_{L^2(\partial B_1)}}.
$$
Then, up to a subsequence, we have
$$
\hW_n\to u \hem\text{ locally uniformly in $H^1(\R^{d+1})\cap C^\alpha(\R^{d+1})$} $$
for all $\alpha\in(0,1)$,
where $u\in\GC$ satisfies
$$
\LSet=\Big \{\sum_{1\le j\le d}  \frac{x_j^2}{\ell_{j}^2}  \le1, y=0 \Big\}\subset\HPP.
$$
\end{lem}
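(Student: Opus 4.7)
The plan is to extract a subsequential limit from $\hW_n$, verify that the limit solves the thin obstacle problem \eqref{TOP} with the prescribed contact set, and confirm it lies in $\GC$.

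First, I would invoke Corollary \ref{CorGradientBound}, which applies because $\OLSetn = E_n \subset B_{1/2}$ by the hypotheses $\ell_{j,n}, \ell_{y,n} < \tfrac12$. This gives uniform bounds $|\hW_n| + |\nabla \hW_n| \le C_R$ on each ball $B_R$, so Arzel\`a--Ascoli yields a subsequence converging locally uniformly, hence in $C^\alpha_{\mathrm{loc}}$ for every $\alpha \in (0,1)$, to some $u \in C^{0,1}_{\mathrm{loc}}(\R^{d+1})$. The uniform $H^1_{\mathrm{loc}}$ bound gives weak $H^1_{\mathrm{loc}}$ convergence, which I would upgrade to strong convergence by testing the equation $\Delta \hW_n = -\chi_{E_n}/c_n$ against $\hW_n - u$, where $c_n := \|\UnMY\|_{L^2(\partial B_1)}$.

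Next I would verify that $u$ solves \eqref{TOP}. The distributional inequality $\Delta u \le 0$ passes to the limit from $\Delta \hW_n \le 0$; non-negativity on $\HPP$ follows from $\hW_n = U_n/c_n \ge 0$ there; symmetry in $y$ is inherited. The key new input is harmonicity off $\HPP$: since $E_n \subset \{|y| \le \ell_{y,n}\}$ with $\ell_{y,n} \to 0$, every compact set $K \subset \{y \ne 0\}$ is disjoint from $E_n$ for $n$ large, so $\Delta \hW_n \equiv 0$ on $K$, and thus $\Delta u \equiv 0$ in $\{y \ne 0\}$. For the quadratic growth, I would pass the frequency bound $\Phi(\hW_n; r) \le 2$ from Lemma \ref{LemFrequencyBound} to the limit to obtain $\lambda_\infty \le 2$, then apply Corollary \ref{CorGrowthRateFromAlmgren} (whose hypothesis $u \Delta u = 0$ holds by Lemma \ref{LemIdentifyDelta}) to get $\int_{\partial B_R} u^2 \, \dH{d} \le R^{d+4} \int_{\partial B_1} u^2 \, \dH{d}$, and combine with interior regularity to deduce $|u(X)| \le C(1 + |X|^2)$. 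Non-triviality comes from passing the normalization $\|\hW_n\|_{L^2(\partial B_1)} = 1$ to the limit.

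For the contact set, the easy inclusion $E' \subseteq \LSet$ is immediate: for $(x_0, 0)$ in the interior of $E'$ one has $(x_0, 0) \in E_n \cap \HPP$ for $n$ large, so $\hW_n(x_0, 0) = 0$, giving $u(x_0, 0) = 0$, and continuity yields $u \equiv 0$ on $E'$. The reverse inclusion $\LSet \subseteq E'$ is the main obstacle, because pointwise convergence does not a priori preserve zero sets: a point outside $E'$ at which $\hW_n$ is strictly positive could still be a zero of $u$. To rule this out I would use the explicit decomposition $U_n = P_n - c_n + V_{E_n}$ from Theorem \ref{ThmRichnessOfSC} \ref{item:existence_of_ellipsoidal_solutions_by_poly}), and analyze the limits of $P_n/c_n$ and $V_{E_n}/c_n$ as the ellipsoid flattens: one expects $c_n \to 0$ at rate $\sim \ell_{y,n}$ and $V_{E_n}/c_n$ to converge to a renormalized single-layer potential of $E'$, from which I would read off a strictly positive limiting value of $\hW_n(x_0, 0)$ for $(x_0, 0) \notin E'$. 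Once $\LSet \subseteq E'$ is established, compactness of $\LSet$ is automatic, so $u \in \GC$ and the lemma follows.
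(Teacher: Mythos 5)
Your extraction of the subsequential limit, the verification that $u$ solves \eqref{TOP}, and the growth estimate are all essentially the paper's Steps~1 and~2. The trouble is in your Step~3, on the crux inclusion $\LSet\subseteq E'$: what you write there is a plan with unverified claims, not an argument, and it bypasses the key idea the paper actually uses.

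Concretely, you propose to use the explicit decomposition $U_n=P_n-1+V_{E_n}$, compute the rate at which the normalization constant vanishes, and show that $V_{E_n}$, suitably renormalized, converges to a single-layer potential which is \emph{strictly} less than the quadratic part outside $E'$. Each of these steps is nontrivial and none is carried out: you do not identify the renormalization rate, you do not control the behaviour of the flattening potential near $\partial E'$ (where degeneracy is worst), and you do not establish the strict positivity of the limit off $E'$. Such computations for potentials of degenerating ellipsoids are exactly the kind of explicit potential theory the paper is designed to avoid, and as written this is a gap rather than a different proof. The paper's route is considerably shorter: Proposition \ref{PropObConvexity} gives $D^2 U_n\ge 0$, hence $\hW_n$ is convex in the $\HPP$-directions, hence so is $u$, hence $\LSet$ is convex. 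If $X\in\LSet\setminus E'$, then $\mathrm{Conv}(\{X\}\cup E')\subset\LSet$; but $\Delta u=0$ outside $E'$ (by the same $\ell_{y,n}\to 0$ argument you used for $\{y\ne 0\}$), so by Lemma \ref{LemIdentifyDelta} both $u$ and $\partial_y u$ vanish on the set $\mathrm{Conv}(\{X\}\cup E')\setminus E'$, which has nonempty interior in $\HPP$. Unique continuation for harmonic functions then forces $u\equiv 0$, contradicting $\|u\|_{L^2(\partial B_1)}=1$. You already have all the ingredients for this (you invoke $\ell_{y,n}\to 0$ to get harmonicity off $\HPP$, you record the normalization), but you missed that convexity plus unique continuation closes Step~3 without any potential-theoretic estimate. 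I'd rewrite Step~3 along those lines; the rest of your proof stands.
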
 
Recall the classes of solutions $\SC$ and  $\GC$ defined in \eqref{EqnSC} and Definition \ref{DefGC}. The contact set $\LSet$ is defined in \eqref{CSet}.

\begin{proof}
The sub-sequential convergence  of $(\hW_n)_{n \in \N}$ follows from Corollary \ref{CorGradientBound}. It remains to see that the limit $u$ is in $\GC$ and has the desired contact set. 
\\ \\
\textit{Step 1: The limit  $u$ is a  solution to  \eqref{TOP}.}

With $\Delta U_n\le 1$ and $\Delta \frac{y^2}{2}=1$, we have $\Delta\hW_n\le 0$. Locally uniform convergence implies
$$
\Delta u\le 0 \hem\text{ in $\R^{d+1}$.}
$$
With $U_n\ge0$ in $\R^{d+1}$, we have $\hW_n\ge0$ on $\HPP$. Thus 
$$
u\ge0\hem \text{ on $\HPP.$}
$$

Given a point $X \in\{u>0\}$, locally uniform convergence gives $U_n(X)>0$ for large $n$. Thus $\Delta U_n({X})=1$ for large $n$. This implies $\Delta\hW_n({X})=0,$ and consequently, 
$$
\Delta u({X})=0 \text{ at ${X}\in\{u>0\}$.}
$$ 
For any point $X\notin\HPP$, our assumption $\ell_{y,n}\to0$ means ${X}\notin\OLSetn$ for all large $n$. Therefore, 
$$
\Delta u({X})=0 \text{ for any ${X}\notin\HPP$.}
$$

As a result, the limit $u$ is a global solution to the thin obstacle problem \eqref{TOP}.
\\ \\
\textit{Step 2: The limit  $u$ has sub-quadratic growth.}

For $R>0$, convergence in $H^1(B_R)$ implies
$$
\int_{B_R}|\nabla u|^2\le\liminf \limits_{{n \to \infty}} \int_{B_R}|\nabla\hW_n|^2=\liminf\limits_{{n \to \infty}} \frac{\int_{B_R}|\nabla(\UnMY)|^2}{\|\UnMY\|_{L^2(\partial B_1)}^2}.
$$
With Lemma \ref{LemFrequencyBound}, we continue as
\begin{align}
\int_{B_R}|\nabla u|^2&\le\liminf \limits_{{n \to \infty}} \frac{2\int_{\partial B_R}|\UnMY|^2}{R\|\UnMY\|_{L^2(\partial B_1)}^2}=\frac{2}{R}\liminf \limits_{{n \to \infty}} \int_{\partial B_R}\hW_n^2 \dH{d} \\
&=\frac{2}{R}\int_{\partial B_R}u^2 \dH{d}.
\end{align}

Recall  Almgren's frequency function from \eqref{EqnAlmgren}, we have
$$
\Phi(u;R)\le 2 \hem\text{ for all $R>0$.}
$$
Applying Corollary \ref{CorGrowthRateFromAlmgren}, we get
$$
\frac{1}{R^{d+4}}\int_{\partial B_R}u^2 \dH{d}   \le \int_{\partial B_1}u^2  \dH{d}.
$$
As a consequence of the normalization we made for $\hW_n$ and the uniform convergence of $\hW_n\to u$ on $\partial B_1$, we have
\begin{equation}
\label{EqnUnitHeightForLimit}
\int_{\partial B_1}u^2  \dH{d}=1.
\end{equation}
Thus for all $R\ge 1$, we have
$$
\int_{\partial B_R}u^2  \dH{d}  \le R^{d+4}.
$$ 
Since $u\Delta u=0$, the function $u^2$ is subharmonic. The previous estimate implies
$$
\sup_{X\in\R^{d+1}}\frac{|u(X)|}{|X|^2+1}<\infty.
$$
\\ \\
\textit{Step 3: The limit $u$ has the desired contact set.}

Denote the limiting ellipsoid by ${E'}$, that is, 
$$
{E'}:=\Big\{\sum_{1\le j\le d} \tfrac{x_j^2}{\ell_{j}^2} \le1 {,y=0} \Big \}.
$$
With $\{\UnMY=0\}\supset E_n\cap\HPP\to {E'}$, we have
$$
\LSet\supset {E'}.
$$

On the other hand, similar arguments as in Step 1 give
\begin{equation}
\label{EqnHarmonicOutSideE}
\Delta u=0 \text{ outside ${E'}$. }
\end{equation}
With the convexity of $U_n$ as in Proposition \ref{PropObConvexity}, we have that $\hW_n$ is convex along all the directions in $\HPP$. This implies the same convexity for $u$, and the convexity of $\LSet.$

Suppose there is ${X}\in\LSet\backslash E'$, then
$$\LSet\supset\mathrm{Conv}(\{{X}\}\cup E'),$$
where $\mathrm{Conv}(\cdot)$ denotes the convex hull of a set. 
With \eqref{EqnHarmonicOutSideE}, we have
$$
u=\Delta u=0 \text{ in $\mathrm{Conv}(\{X\}\cup {E'})\backslash E'$.}
$$
 Lemma \ref{LemIdentifyDelta} implies
$$
u=\frac{\partial u}{\partial y}=0 \text{ in $\mathrm{Conv}(\{X\}\cup E')\backslash E'$.}
$$
Unique continuation for harmonic functions implies $u=0$ in $\R^{d+1}$, contradicting \eqref{EqnUnitHeightForLimit}.

Therefore we must have $\LSet={E'}.$
\end{proof} 

The part of Theorem \ref{IntroMainThm1} concerning the existence of solutions follows:
\begin{proof}[Proof of Theorem \ref{IntroMainThm1}]
Given $\ell_j>0$ {for $j= 1, \dots, d$} as in  Theorem \ref{IntroMainThm1}, we might assume 
$\ell_j<\frac12
$ for each $j=1,2,\dots,d$ by a rescaling. Now for each $n\in\mathbb{N}$, define the thickening of ${E'}$ as
$$
E_n:=\Big \{\sum_{1\le j\le d}  \frac{x_j^2}{\ell_j^2}+n^2y^2\le1 \Big \}.
$$

Let $U_n$ denote a solution to the classical obstacle problem, given by Theorem \ref{ThmRichnessOfSC}, with
$$
\OLSetn=E_n.
$$We get the desired solution to the thin obstacle problem by applying Lemma \ref{LemMainLemma}.
\end{proof} 
The part of Theorem \ref{IntroMainThm1} concerning the uniqueness of solutions is delayed to Section 6.

\section{Solutions with prescribed polynomial expansions}
We move towards the classification of compact contact sets as in Theorem \ref{IntroMainThm2}. 

Recall the space of solutions to the thin obstacle problem $\GC$ from Definition \ref{DefGC}, the space of polynomials $\PC$ from \eqref{EqnPC}, and  the bijection between them in Theorem \ref{ThmBijection}. Given a polynomial $p\in\PC$, it suffices to construct a solution $u\in\GC$ with an ellipsoid as its contact set and $p$ as its leading order expansion.

In the classification of compact contact sets for the classical obstacle problem, this step was achieved by a non-trivial fixed point argument in DiBenedetto-Friedman \cite{DF}. Unfortunately, this argument relies crucially on the explicit relation between a set and its Newtonian potential, and has no clear analogue in the thin obstacle problem. 

Similar to the previous section, we construct our solution as the {linearization} limit of a sequence of solutions to the classical obstacle problem {around the polynomial $\tfrac{y^2}{2}$}. To avoid degeneracy of the contact sets, we need the following lemma:

\begin{lem}
\label{LemNonDeg}
Given small $\eps,\delta>0$, big $R>0$, and a polynomial $P$ of the form
$$
P=\tfrac12y^2+\tfrac12\delta \Big[\sum_{1\le j\le d}a_jx^2_j- y^2 \Big]
$$
with 
$$
a_j > 0 \text{ and }\sum_{1\leq j\leq d} a_j=1,
$$
suppose that $U$ solves the classical obstacle problem in $B_R$ with
$$
U\le P-\eps \text{ on $\partial B_R$,}
$$
then 
$$
\OLSet\supset  {(} B_{\frac{\eps}{\delta R}}\cap\HPP {)}.
$$
\end{lem}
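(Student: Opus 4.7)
The plan is to produce, for each point $(x_0, 0) \in B_{\eps/(\delta R)} \cap \HPP$, an explicit quadratic upper barrier $W$ for $U$ in $B_R$ which vanishes at $(x_0, 0)$; comparison with $U \geq 0$ then forces $U(x_0, 0) = 0$, proving the claimed inclusion.

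The natural first guess $P - \eps$ satisfies $\Delta(P - \eps) = 1$ and dominates $U$ on $\partial B_R$, but may be negative in the interior. I would correct this by subtracting the affine Taylor part at $(x_0, 0)$, namely set
\[ W(X) := P(X) - \eps + L(X), \qquad L(X) := \eps - P(x_0, 0) - \nabla P(x_0, 0) \cdot (X - (x_0, 0)). \]
By construction $W(x_0, 0) = 0$ and $\nabla W(x_0, 0) = 0$. Since $P$ is convex and $L$ affine, $W$ is a convex quadratic attaining its global minimum at $(x_0, 0)$, so $W \geq 0$ throughout $\R^{d+1}$. Moreover $\Delta W = \Delta P = 1$, so $W$ is a valid classical supersolution of the obstacle problem.

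The key quantitative step is verifying $W \geq U$ on $\partial B_R$; since $U \leq P - \eps$ there, this reduces to $L \geq 0$ on $\partial B_R$. Using $\nabla P(x_0, 0) = \delta(a_1(x_0)_1, \dots, a_d(x_0)_d, 0)$ on the thin plane, a direct computation yields
\[ L(X) = \eps + \tfrac{\delta}{2}\sum_j a_j (x_0)_j^2 - \delta \sum_j a_j (x_0)_j x_j. \]
Cauchy--Schwarz combined with $a_j \leq 1$ bounds $\bigl|\sum_j a_j (x_0)_j x_j\bigr| \leq |x_0|\,|x| \leq |x_0|\, R$ on $\partial B_R$, so $L(X) \geq \eps - \delta |x_0| R \geq 0$ precisely when $|x_0| \leq \eps/(\delta R)$, which is the very radius asserted in the lemma.

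To finish, I would invoke the comparison principle for the classical obstacle problem: testing the equation against $(U - W)_+ \in H^1_0(B_R)$, or equivalently observing that on the open set $\{U > W\}$ one has $\Delta(U - W) = 1 - \Delta W \geq 0$ with zero boundary values, gives $U \leq W$ in $B_R$. Evaluating at $(x_0, 0)$ yields $0 \leq U(x_0, 0) \leq W(x_0, 0) = 0$, hence $(x_0, 0) \in \OLSet$. The entire argument is essentially immediate once the barrier is written down; the only real obstacle is realizing that the correct correction to $P - \eps$ is exactly the affine function that kills its first-order Taylor part at $(x_0, 0)$, after which convexity and comparison do all the work.
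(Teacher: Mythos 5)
Your proof is correct and, modulo presentation, is the same argument the paper gives: the barrier you build is literally identical to the paper's. The paper translates the polynomial to center it at $\xi$, setting $Q=\tfrac12 y^2+\tfrac12\delta\bigl[\sum_j a_j(x_j-\xi_j)^2-y^2\bigr]$; you take the second-order Taylor part of $P$ at $(x_0,0)$, namely $W(X)=\tfrac12\delta\sum_j a_j(x_j-(x_0)_j)^2+\tfrac12(1-\delta)y^2$. These are the same quadratic, and in both cases the verification $W\geq U$ on $\partial B_R$ reduces to the same affine estimate on $L=Q-P+\eps$. The only cosmetic difference is the final bound: you apply Cauchy--Schwarz to get $\bigl|\sum_j a_j(x_0)_j x_j\bigr|\le |x_0|\,R$, giving nondegeneracy exactly on the Euclidean ball, while the paper bounds $\sum_j a_j|x_j|\le\max_j|x_j|\le R$ and obtains the (slightly larger) $\ell^\infty$ cube $\max_j|\xi_j|\le\eps/(\delta R)$; both clearly contain $B_{\eps/(\delta R)}\cap\HPP$. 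Your use of the maximum principle on $\{U>W\}$ is a standard unwinding of the paper's appeal to the comparison principle between obstacle-problem solutions.
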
 

\begin{proof}
Given $\xi_j\in\R$ for $j=1,2,\dots,d$, define 
$$
Q=\tfrac12y^2+\tfrac12\delta \Big [\sum_{1\le j\le d}a_j(x_j-\xi_j)^2- y^2 \Big].
$$
This is a solution to the classical obstacle problem. Meanwhile, if $|\xi_j|\le\frac{\eps}{\delta R}$, then on $\partial B_R$ we have
\begin{align*}
Q-U&=Q-P+P-U\\
&\ge \tfrac12\delta\sum_{1 \leq j \leq d} a_j(\xi_j^2-2x_j\xi_j)+\eps\\
&\ge -\delta R\cdot \max|\xi_j|+\eps\\
&\ge0.
\end{align*}
By the comparison principle between solutions, we have
$$
U\le Q \text{ in $B_R$.}
$$
As a result,  
$$U(\xi_1,\xi_2,\dots,\xi_d,0)=0$$ as long as $|\xi_j|\le\frac{\eps}{\delta R}$ for each $j$. 
\end{proof} 
The potential term in the expansion from part 2) of Theorem \ref{ThmRichnessOfSC} has the following decay:
\begin{lem}
\label{LemDecayOfV}
Let the {homogeneous, quadratic polynomial} $P$,  the solution $U$, the ellipsoid $E$, and  the potential $V_E$ be as in the statement of Theorem \ref{ThmRichnessOfSC}. 

There is $R>0$, depending on the coefficients ${(a_j)_{j=1}^d}$ of $P$, such that 
$$
\OLSet\subset B_R,
$$and
$$
V_E({X})\le (m-1)^{1-d} \hem\text{ for ${X}\notin B_{mR}$ and $m>1$.}
$$ 
\end{lem}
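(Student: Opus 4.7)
The approach rests on two simple observations: first, the Newtonian potential $V_E$ is non-negative everywhere by its integral definition \eqref{DefNT}; second, on the contact set $E = \{U = 0\}$ the identity $0 = U = P - 1 + V_E$ forces $P \le 1$. Together these confine $E$ to a coordinate-axis-aligned ellipsoid, from which the desired radius $R$ is extracted directly.

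In more detail, the first step is to note that $E \subset \{P \le 1\}$. Since $P(X) = \tfrac{1}{2}\sum_{j=1}^{d+1} a_j x_j^2$ (with the convention $x_{d+1} = y$), the sublevel set $\{P \le 1\}$ is the ellipsoid with semi-axes $\sqrt{2/a_j}$. Hence
$$\{U=0\} = E \subset \{P \le 1\} \subset B_{R_0}, \qquad R_0 := \max_{1 \le j \le d+1}\sqrt{2/a_j},$$
and $R_0$ depends only on $(a_j)$. Note also that $|E|$ is itself determined by the $a_j$, since $E$ is pinned down by the requirement $V_E = 1 - P$ on $E$.

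For the decay bound, fix $m > 1$ and take any $X \notin B_{mR}$ and $Y \in E \subset B_R$; the triangle inequality yields $|X - Y| \ge |X| - |Y| \ge (m-1) R$. Inserting this into \eqref{DefNT} gives
$$V_E(X) \le \kappa_{d+1}\, |E| \, [(m-1) R]^{1-d} = \kappa_{d+1}\, |E|\, R^{1-d}\, (m-1)^{1-d}.$$
Enlarging $R$ if necessary so that $R \ge (\kappa_{d+1}\, |E|)^{1/(d-1)}$ makes the prefactor at most $1$, delivering $V_E(X) \le (m-1)^{1-d}$. The final choice $R := \max\!\bigl(R_0,\, (\kappa_{d+1}\, |E|)^{1/(d-1)}\bigr)$ meets both requirements simultaneously and depends only on $(a_j)$.

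The only mild subtlety is the first step: recognizing that non-negativity of $V_E$ together with the identity $V_E = 1 - P$ on $E$ yields the containment for free. Beyond that, the decay estimate is just a crude bound obtained by replacing the integrand in \eqref{DefNT} by its supremum over $E$, so no further obstacle is expected.
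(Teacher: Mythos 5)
Your proof is correct. The containment step is the same as the paper's: $V_E \ge 0$ together with $U = P - 1 + V_E$ forces $\{U = 0\} = E \subset \{P \le 1\}$, which sits inside a ball whose radius is controlled by $1/\sqrt{\min a_j}$. Where you diverge is in the decay estimate. You bound $\int_E |X - Y|^{1-d}\,\dx{Y}$ crudely by $|E|\,[(m-1)R]^{1-d}$ and then enlarge $R$ so that the prefactor $\kappa_{d+1}|E|\,R^{1-d}$ drops below $1$. The paper instead observes that, since $|Y| \le R$ for $Y \in E$ and $1-d < 0$, one has $R^{1-d} \le |Y|^{1-d}$, so that $\kappa_{d+1}\int_E R^{1-d}\,\dx{Y} \le \kappa_{d+1}\int_E |Y|^{1-d}\,\dx{Y} = V_E(0)$; and $V_E(0) = 1$ because $0 \in E$, $P(0) = 0$, and $U(0) = P(0) - 1 + V_E(0) = 0$. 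This normalization $V_E(0) = 1$ delivers the constant $(m-1)^{1-d}$ exactly, without any post hoc enlargement of $R$ and without invoking a bound on $|E|$ in terms of $(a_j)$. Your route is perfectly sound — enlarging $R$ (which only helps the containment) is harmless, and $|E|$ is indeed determined once $E$ is fixed — but the paper's trick is tighter and avoids having to argue that $|E|$ is controlled by the coefficients.
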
 

\begin{proof}
With the positivity of $V_E$, we see that $U>P-1.$ As a result, there is $R>0$, of the size $1/\sqrt{\min_{{j=1,\dots, d}} a_j}$,  such that 
$$
E=\OLSet\subset B_R.
$$
For $X=(x,y)$ outside $B_{mR}$ with $m> 1$, we have
\begin{align*}
V_E(X)&={\kappa_{d+1}} \int_{E}|X-Y|^{1-d}dY
\le {\kappa_{d+1}} (m-1)^{1-d}\int_ER^{1-d} dY\\
&\le{\kappa_{d+1}} (m-1)^{1-d}\int_E|Y|^{1-d}dY
=(m-1)^{1-d}V_E(0).
\end{align*}
To conclude, we note that $U(0)=0$ implies
$
V_E(0)=1.
$
\end{proof} 

With these preparations, we give the main lemma of this section:
\begin{lem}
\label{LemMainLemma2}
Given $a_j>0$ for $j=1,2,\dots,d$ with $\sum_{{j=1}}^{{d}} a_j=1$, and the corresponding polynomial
$$
p=\tfrac12\sum_{1\le j\le d}a_jx_j^2-\tfrac12 y^2,
$$
we can find $u\in\GC$ such that 
$$\LSet \text{ is a thin ellipsoid in $\HPP$,}$$ 
and that 
$$
u=p-1+v
$$
as in the expansion in Theorem \ref{ThmBijection}.
\end{lem}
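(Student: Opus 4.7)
The plan is to realize $u$ as the limit of a suitably rescaled family of classical obstacle problem solutions whose blow-down linearizes to $p$. For each small $\delta>0$, set
\[
P_\delta := \tfrac{y^2}{2} + \delta p = \tfrac{1-\delta}{2}y^2 + \tfrac{\delta}{2}\sum_j a_jx_j^2,
\]
a homogeneous quadratic with positive coefficients summing to one. Theorem~\ref{ThmRichnessOfSC} \ref{item:existence_of_ellipsoidal_solutions_by_poly}) then gives $U_\delta = P_\delta - 1 + V_{E_\delta}\in\SC$ with $\{U_\delta=0\}=E_\delta$. Using the $2$-homogeneity of $P_\delta$ together with the Newtonian scaling $V_{cE}(X)=c^2V_E(X/c)$, the dilated solution
\[
\tilde U_\delta(X):=\delta U_\delta(X/\sqrt{\delta})=P_\delta(X)-\delta+V_{\tilde E_\delta}(X),\qquad \tilde E_\delta:=\sqrt{\delta}\,E_\delta,
\]
again lies in $\SC$ with contact set $\tilde E_\delta$ and satisfies $V_{\tilde E_\delta}(0)=\delta V_{E_\delta}(0)=\delta$.

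Define $\tilde u_\delta := (\tilde U_\delta - y^2/2)/\delta = p - 1 + v_\delta$ with $v_\delta := V_{\tilde E_\delta}/\delta$. Since $V_{\tilde E_\delta}$ is non-negative and attains its maximum at the origin by superharmonicity, $0\le v_\delta\le 1$ everywhere. Inside $\tilde E_\delta$ the formula $V_{\tilde E_\delta}=\delta-P_\delta$ gives $|\nabla V_{\tilde E_\delta}|\le C\delta$, and the Newton integral representation provides the same estimate outside on each $B_R$. Hence $\tilde u_\delta$ has uniform $C^{0,1}$ bounds on compact sets, and Arzelà--Ascoli furnishes a subsequence with $v_{\delta_n}\to v$ locally uniformly, so $\tilde u_{\delta_n}\to u:=p-1+v$.

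To see $u\in\GC$, observe that $\Delta\tilde u_\delta=-\chi_{\tilde E_\delta}/\delta$ is a negative measure of uniformly bounded total mass, supported in $\tilde E_\delta\subset B_1$ (Lemma~\ref{LemDecayOfV} after the $\sqrt{\delta}$-rescaling). Since $\tilde E_\delta$ contains the ball of radius $\tilde\ell_{d+1,\delta}$ centered at the origin, one has $V_{\tilde E_\delta}(0)\ge c\,\tilde\ell_{d+1,\delta}^2$, and the relation $V_{\tilde E_\delta}(0)=\delta\to 0$ forces $\tilde\ell_{d+1,\delta}\to 0$. Hence the Hausdorff limit $\tilde\Lambda$ of the $\tilde E_{\delta_n}$ lies in $\HPP$, $\Delta u$ is a non-positive measure supported there, and $u$ is harmonic off $\HPP$. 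The conditions $u\ge 0$ on $\HPP$ and the $y$-symmetry pass to the limit, so $u$ solves~\eqref{TOP}. Finally $V_{\tilde E_\delta}(X)\le C\delta/|X|^{d-1}$ at infinity gives $v\to 0$ at infinity, matching the Theorem~\ref{ThmBijection} expansion with $p-1\in\PC$.

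For the contact set, an interior point $X_0$ of $\tilde\Lambda$ eventually lies in $\tilde E_{\delta_n}$, so $\tilde u_{\delta_n}(X_0)=0$ and $u(X_0)=0$; a point $X_0\in\HPP\setminus\tilde\Lambda$ satisfies $p(X_0)-1>0$ and $v_{\delta_n}(X_0)\ge 0$, so $u(X_0)>0$. Hence $\Lambda(u)=\tilde\Lambda$. Each $\tilde E_{\delta_n}\cap\HPP$ is an ellipsoid contained in $B_1$; applying Lemma~\ref{LemNonDeg} on a fixed $\partial B_R$ with $\eps=\delta_n/2$ (permissible because Lemma~\ref{LemDecayOfV} provides $V_{\tilde E_{\delta_n}}\le\delta_n/2$ there for $R$ large) gives $\tilde E_{\delta_n}\cap\HPP\supset B_{1/(2R)}\cap\HPP$. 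Compactness within this family of ellipsoids then yields $\tilde\Lambda$ as a thin ellipsoid with non-empty interior. The main obstacle is locating the spatial rescaling $X\mapsto X/\sqrt{\delta}$: only at this scale does the $\delta$-offset in $\tilde U_\delta-y^2/2$ match $V_{\tilde E_\delta}$ in order, so that $v_\delta$ stays $O(1)$ and the limit retains $p$ as its polynomial part.
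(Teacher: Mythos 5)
Your construction follows the paper's own route (the paper's ``rescaled version'' of Theorem \ref{ThmRichnessOfSC} is exactly your $\sqrt{\delta}$-dilation), and normalizing the linearization by $\delta$ via $V_{\tilde E_\delta}(0)=\delta$ is a nice simplification that makes the expansion $u=p-1+v$ immediate, replacing the paper's argument that the ratio $\tfrac1n\tfrac1{m_n}$ converges to some $\beta>0$. However, the two analytic cores are not justified. First, compactness: you claim $|\nabla V_{\tilde E_\delta}|\le C\delta$ on compact sets, inside from $V_{\tilde E_\delta}=\delta-P_\delta$ and outside from the Newton integral. Inside, $|\partial_y V_{\tilde E_\delta}|=(1-\delta)|y|$, and the only thickness bound you establish is $\tilde\ell_{d+1,\delta}\le C\sqrt{\delta}$, so this gives merely $|\partial_y v_\delta|\le C/\sqrt{\delta}$, which blows up; outside, near the thin set the crude bound on $\int_{\tilde E_\delta}|X-Y|^{-d}\dx{Y}$ is of order (thickness)$\times\log$, and even granting thickness of order $\delta$ --- which you have not proved, and which is also needed for your claim that $\chi_{\tilde E_\delta}/\delta$ has uniformly bounded mass --- the asserted uniform Lipschitz bound does not follow from the integral representation without exploiting cancellation. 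Equicontinuity of $v_\delta$ across $\HPP$ is exactly what the paper spends Lemma \ref{LemFrequencyBound} and Corollary \ref{CorGradientBound} on (Weiss monotonicity, frequency and doubling bounds, plus tangential convexity from Proposition \ref{PropObConvexity}). A cheap repair is available: $0\le V_{\tilde E_\delta}\le V_{\tilde E_\delta}(0)=\delta$ gives $|\tilde U_\delta-\tfrac{y^2}{2}|\le\delta(|p|+2)$ locally, hence the normalizing $L^2$-norm in Corollary \ref{CorGradientBound} is $\le C\delta$ and its Lipschitz bound transfers to your $\delta$-normalization; but as written the Arzel\`a--Ascoli step is unsupported.

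Second, the contact set: your dichotomy ``either $X_0$ is interior to $\tilde\Lambda$, or $p(X_0)-1>0$'' is false. One only knows $\tilde E_{\delta}\subset\{P_\delta\le\delta\}$, i.e. $\tilde\Lambda\subset\{p\le1\}\cap\HPP$, and $\tilde\Lambda$ is in general strictly smaller than $\{p\le1\}\cap\HPP$ (the classical contact set sits strictly inside $\{P_\delta\le\delta\}$, and in the limit $u=v>0$ on $\partial\{p\le 1\}\cap\HPP$). On the intermediate region $(\{p\le1\}\cap\HPP)\setminus\tilde\Lambda$ you have $p-1\le0$, and $v_{\delta_n}\ge0$ gives no positivity of $u$, so the inclusion $\Lambda(u)\subset\tilde\Lambda$ is unproven; without it you only get $\Lambda(u)\supset\tilde\Lambda$, which does not show $\LSet$ is an ellipsoid. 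This inclusion is not a formality: the paper's Lemma \ref{LemMainLemma} proves it by noting that $u$ inherits convexity in the thin directions from Proposition \ref{PropObConvexity}, so an extra contact point would force $\LSet$ to contain a convex hull on which $u=\frac{\partial u}{\partial y}=0$ outside the ellipsoid (via Lemma \ref{LemIdentifyDelta} and harmonicity there), and unique continuation would force $u\equiv0$, contradicting the normalization. Some argument of this kind (or a direct appeal to Lemma \ref{LemMainLemma}, which your family satisfies once the semi-axes are controlled) is needed to close the proof.
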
 

\begin{proof}
\mbox{}\\
\textit{Step 1: Preparation of solutions to the obstacle problem.}

For each $n\in\N$, {$n>1$}, define 
$$
P_n=\frac12y^2+\frac{1}{n}p.
$$
An application of (a rescaled version of) Theorem \ref{ThmRichnessOfSC} gives 
$
U_n\in\SC
$
of the form 
\begin{align} \label{eq:definition_of_U_n}
U_n=P_n {-\tfrac{1}{n}}+V_n,
\end{align}
where $\{U_n=0\}$ is an ellipsoid, to be denoted by $E_n$, that is symmetric with respect to the coordinate axes, and $V_n$ is the Newtonian potential of $E_n$. 

With $V_n>0$, we have $U_n> P_n {-\frac{1}{n}}$. Consequently, 
\begin{equation}
\label{EqnLocalizingInY}
E_n\subset \{P_n  {-\tfrac{1}{n}} \le 0\}\subset\{|y|\le\frac{2}{\sqrt{n}}\}.
\end{equation}
On the other hand, by (a rescaled version of) Lemma \ref{LemDecayOfV}, we find ${R_0}>0$, depending only on ${(a_j)_{j=1}^d}$, such that 
$$
E_n\subset B_{{R_0}}, 
$$
and
$$
U_n\le\frac12 y^2+\frac{1}{n}p-\frac{1}{n}+\frac{1}{n}(m-1)^{1-d} \text{ on $\partial B_{m{R_0}}$}
$$
if $m>1$.

We fix $m=4$ and get
$$
U_n\le \frac12 y^2+\frac{1}{n}p-\frac{1}{2n} \text{ on $\partial B_{4{R_0}}$}.
$$
An application of Lemma \ref{LemNonDeg}, with $\delta=\frac{1}{n}$ and $\eps=\frac{1}{2n}$, gives
$$
E_n\supset B_{\frac{1}{8{R_0}}}\cap\HPP.
$$
Combining this with \eqref{EqnLocalizingInY} and the symmetry of $E_n$, we have
$$
E_n=\Big \{\sum_{1\le j\le d} \tfrac{x_j^2}{\ell_{j,n}^2}+\tfrac{y^2}{\ell_{y,n}^2}\le 1 \Big \},
$$
where 
$$\ell_{y,n}\le \frac{C}{\sqrt{n}}\hem\text{ and } \frac{c}{{R_0}}\le\ell_{j,n}\le C$$ for dimensional constants $c$ and $C$. 

Up to a subsequence, we have 
$$
\ell_{y,n}\to 0, \text{ and }\ell_{j,n}\to\ell_j>0 \text{ for each $j=1,2,\dots, d$.}
$$
\\ \\
\textit{Step 2: Convergence of linearizations.}

Let 
$$
\hW_n:=\frac{\UnMY}{\|\UnMY\|_{L^2(\partial B_{2{R_0}})}}.
$$
 We apply (a rescaled version of) Lemma \ref{LemMainLemma} to get
\begin{align} \label{eq:convergence_of_W_hat_n}
\hW_n\to u \text{ locally uniformly in $C^\alpha(\R^{d+1})$}
\end{align}
for all $\alpha\in(0,1)$, where
 $u\in\GC$ satisfies
$$
\LSet=\Big \{\sum_{1\le j\le d} \tfrac{x_j^2}{\ell_{j}^2} \le1 \Big \}\subset\HPP,
$$
and
\begin{align} \label{eq:u_has_uniform_norm}
\| u\|_{L^2(\partial B_{R_0})} =1.
\end{align}

It remains to see that $u$ has the desired expansion.
\\ \\
\textit{Step 3: Identifying the expansion of $u$. }

By definition, we have
$$
\hW_n=\frac{1}{\|\UnMY\|_{L^2(\partial B_{2{R_0}})}}  \frac{1}{n} \Big [\frac12\sum_{1\le j\le d}a_jx_j^2-\frac12y^2-1 \Big ]  {+ \frac{1}{\|\UnMY\|_{L^2(\partial B_{2{R_0}})}} V_n}.
$$
{For notational simplicity let us set 
$$m_n := \|\UnMY\|_{L^2(\partial B_{2R})}.$$

First of all let us note that $\Big (\frac{1}{n} \frac{1}{m_n} \Big )_{n \in \N}$ is bounded.
If this was not true then, since $V_n >0$,
\begin{align}
\hW_n((x,0)) \geq \frac{1}{n} \frac{1}{m_n} (p((x,0)) -1) \to \infty \qquad \text{ for sufficiently large } |x|,
\end{align}
but this violates the locally uniform bound from Corollary \ref{CorGradientBound}.
\\ 
Hence, up to a subsequence, 
\begin{align} \label{eq:convergence_of_1_over_n_1_over_m_n}
	\frac{1}{n} \frac{1}{m_n} \to \beta \in [0,\infty) \quad \text{ as } n \to \infty.
\end{align}

Furthermore we have that $\beta >0$. 
Assume towards a contradiction that this was not true.
By the symmetry of $E_n$ if holds that $0 \leq V_{n}(X) \leq V_{n}(0)$ for all $X \in \R^{d+1}$. On the other hand \eqref{eq:definition_of_U_n} implies that
\begin{align}
0 = U_n(0) = - \frac{1}{n} + V_{n}(0)
\end{align}
and combining these it follows for every $X \in \R^{d+1}$ that
\begin{align} 
|\hW_n(X)| \leq \frac{1}{n} \frac{1}{m_n} |p(X) -1| + \frac{1}{m_n} V_{n}(0) \leq \frac{1}{n} \frac{1}{m_n} (|p(X)| +2) \to 0 \quad \text{ as } n \to \infty.
\end{align}
But this is a contradiction to \eqref{eq:u_has_uniform_norm}. 

The convergence \eqref{eq:convergence_of_W_hat_n} and  \eqref{eq:convergence_of_1_over_n_1_over_m_n} imply that 
\begin{align}
\frac{1}{m_n} V_n \to v \quad \text{ locally uniformly in } C^\alpha(\R^{d+1})
\end{align}
for any $\alpha \in (0,1)$.
}
%
Consequently, we have the expansion 
$$
u=\beta[p-1]+v.
$$ 
Dividing $u$ by $\beta$ and calling $\frac{1}{\beta} v$ again by $v$, we have
$$
u=p-1+{v}.
$$
The uniform decay estimate from Lemma \ref{LemDecayOfV} implies
$v\to 0$ at infinity. The fact $v=u-p+1$ implies that $v$ solves the problem in Theorem \ref{ThmBijection}.
\end{proof} 

Our classification result, Theorem \ref{IntroMainThm2}, follows:

\begin{proof}[Proof of Theorem \ref{IntroMainThm2}]
Given $u\in\GC$ with $\LSet$ having non-empty interior in the thin space $\HPP$, it has an expansion as in Theorem \ref{ThmBijection} of the form 
$$
u=p-1+v.
$$
See also {subsection} \ref{RemNormalizationOfBijection}.
\\
On the other hand, with Lemma \ref{LemMainLemma2}, we construct $\tilde{u}\in\GC$ with the expansion
$$
\tilde{u}=p-1+\tilde{v},
$$
and 
$$
\Lambda(\tilde{u}) \text{ is an ellipsoid.}
$$
The injectivity of {the function} $u\mapsto p-1$ in Theorem \ref{ThmBijection} implies $u=\tilde{u}$. As a result, the contact set $\LSet$ is an ellipsoid. 
\end{proof} 

\section{Uniqueness of solutions with common contact sets}
Suppose that $U$ is a solution to the classical obstacle problem \eqref{OP}, then there is a direct link between the contact set $\{U=0\}$ and the Laplacian of $U$, namely,
$$
\Delta U=\chi_{\{U=0\}}.
$$
From here, it is not difficult to show that there is at most one solution corresponding to a given contact set. 

For the thin obstacle problem \eqref{TOP}, this link is lost (see, for instance, Lemma \ref{LemIdentifyDelta}). As a result, very little is known about solutions with the same contact set. 
In our case, however, we can show that the space of solutions with the same contact set is one-dimensional. 

The following is the main result of this section and corresponds to the uniqueness part of Theorem \ref{IntroMainThm1}. Recall the class of solutions $\GC$ from Definition \ref{DefGC}, and the class of polynomials $\PC$ from \eqref{EqnPC}.

\begin{prop}
\label{PropUniqueness}
For $d\ge2$,
given $\ell_j>0$ for $j=1,2,\dots,d$ and the corresponding thin ellipsoid
$$
E':= \Big \{ (x,y) \in \R^{d} \times \R: \sum_{1\le j\le d}  \tfrac{x_j^2}{\ell_j^2}  \le1, y = 0 \Big \}\subset\HPP,
$$
suppose that  $u_1, u_2\in\GC$ satisfy  
$$\Lambda(u_1)=\Lambda(u_2)=E',$$ then there is $c\in\R$ such that 
$$
u_2=cu_1.
$$
\end{prop}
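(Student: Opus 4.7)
The plan is to prove $u_2 = c u_1$ by showing that the difference $w := u_1 - c u_2$ vanishes identically, where $c > 0$ is chosen via the bijection of Theorem \ref{ThmBijection} to cancel the $y^2$-dependence in the polynomial parts of $u_1$ and $u_2$. Write $u_i = p_i + v_i$ with $p_i \in \PC$ and $v_i \to 0$ at infinity. Each $p_i$ has the form $p_i(x,y) = \tfrac{1}{2} x^T A_i x - \tfrac{1}{2}\tau_i y^2 - \gamma_i$ with $A_i$ a positive-definite symmetric $d\times d$ matrix (forced by compactness of $\{p_i(\cdot,0)\le 0\}$), $\gamma_i > 0$, and $\tau_i = \mathrm{tr}(A_i) > 0$ (from harmonicity of $p_i$). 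Setting $c := \tau_1/\tau_2 > 0$ gives
\[
p_1 - c\, p_2 = \tfrac{1}{2}\, x^T B x - C,
\]
with $B = A_1 - c A_2$ a trace-free symmetric matrix and $C = \gamma_1 - c\gamma_2 \in \R$. If $w \equiv 0$, then the polynomial tail vanishes, forcing $B = 0$ and $C = 0$; hence $p_1 = c p_2$, and since $c u_2 \in \GC$ has polynomial part $c p_2 = p_1$, the injectivity in Theorem \ref{ThmBijection} yields $u_1 = c u_2$.

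By Lemma \ref{LemIdentifyDelta} each $u_i$ is harmonic on $\R^{d+1}\setminus E'$ and vanishes on $E'$, so $w$ is harmonic on this open, connected, unbounded set, vanishes on $E'$, and satisfies the asymptotic
\[
w(X) = \tfrac{1}{2}\, x^T B x - C + o(1) \quad \text{as } |X|\to\infty.
\]
In the case $B = 0$ and $C = 0$, $w$ is bounded, harmonic in $\R^{d+1}\setminus E'$ with zero boundary value on $E'$ and vanishing at infinity, so the maximum principle (applied to the unbounded exterior of $E'$) forces $w \equiv 0$.

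The main obstacle is to exclude a non-trivial polynomial tail ($B\neq 0$ or $C \neq 0$). Additional rigidity comes from the identity
\[
\Delta w = 2\bigl(\partial_y u_1 - c\,\partial_y u_2\bigr)\mathcal{H}^d\big|_{E'},
\]
the sign constraints $\partial_y u_i \leq 0$ on $E'$ (from $\Delta u_i\le 0$), and the tangential convexity of each $u_i$ on $\HPP$ inherited from the convexity of the approximating classical solutions in Lemma \ref{LemMainLemma}. A non-zero trace-free $B$ would force $w$ to change sign at infinity within $\HPP$, which I expect to be incompatible with the tangential convexity combined with $w = 0$ on $E'$; a non-zero $C$ should be ruled out via a Hopf-type argument at smooth free-boundary points of $\partial E'$ using the characteristic $\mathrm{Re}(z^{3/2})$ thin-obstacle asymptotics of both $u_1$ and $u_2$. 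I expect this case analysis to be the technical heart of the proof.
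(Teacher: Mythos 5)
Your proposal differs from the paper's argument in a crucial way, and the difference is exactly where your proof stalls. You normalize $c$ so that the $y^2$-coefficients of $p_1$ and $c p_2$ cancel, which leaves behind a trace-free quadratic form $\tfrac12 x^{T}Bx$ and a constant $C$ that you then need to eliminate. You acknowledge yourself that ruling out $B\neq 0$ or $C\neq 0$ is the ``technical heart,'' but the ideas you sketch do not close the gap: $w=u_1-cu_2$ is a \emph{difference} of tangentially convex functions, so tangential convexity gives you no sign control on $w$, and the Hopf-type argument at the free boundary would require a detailed understanding of the $3/2$-expansions of both solutions at every regular point of $\partial E'$ that is not supplied. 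So as written, the proposal is incomplete at precisely the step that carries all the difficulty.

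The paper sidesteps this entirely by making a different normalization. First it uses Lemma~\ref{LemSymmetryOfPoly} (symmetry of $E'$ forces the $p_i$ to be diagonal with no linear terms), then it rescales so that the $x_1^2$-coefficients of $p_1$ and $p_2$ coincide, i.e.\ $\partial_{x_1}p_1=\partial_{x_1}p_2$ identically. Since $u_i$ is harmonic outside $B_1\supset E'$ and $|u_i-p_i|\to 0$, standard elliptic estimates give $|\partial_{x_1}(u_i-p_i)|\to 0$, hence $\partial_{x_1}(u_1-u_2)\to 0$ at infinity. But $\partial_{x_1}(u_1-u_2)$ is harmonic in $\R^{d+1}\setminus E'$ and vanishes on $E'$ (since both $u_i$ vanish there), so the comparison principle kills it globally. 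Then $u_1-u_2$ is $x_1$-independent, vanishes on a full cylinder $\{x_1\in\R,\,y=0,\,|x_j|\le b_j\}$, its $\partial_y$ also vanishes there off $E'$ by Lemma~\ref{LemIdentifyDelta}, and unique continuation finishes. Notice that normalizing on a \emph{tangential} coefficient rather than on the $y^2$-coefficient produces a single scalar harmonic quantity that vanishes both at infinity and on $E'$; your normalization produces a function whose polynomial tail is merely trace-free, which is much weaker. If you want to salvage your approach, replace the choice $c=\tau_1/\tau_2$ with a choice that matches one of the $x_j$-coefficients (available after diagonalization), and argue on $\partial_{x_j}(u_1-cu_2)$ instead of on $u_1-cu_2$ itself.
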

\begin{rem}
We thank one of the referees for pointing out this interesting question of uniqueness. 
\end{rem} 
For $u_1$ and $u_2$  as in Proposition \ref{PropUniqueness},  let $p_1$ and $p_2$ be their polynomial expansions at infinity as in Theorem \ref{ThmBijection}. It suffices to show that $p_2$ is a multiple of $p_1.$ 

The following lemma shows that the polynomial expansions inherit the symmetries of the contact sets:

\begin{lem}
\label{LemSymmetryOfPoly}
For $u\in\GC$ in $\R^{d+1}$ with $d\ge2$, let $p\in\PC$ be its polynomial expansion at infinity as in Theorem \ref{ThmBijection}.

If the contact set $\LSet$ is centered at the origin, then $\nabla p(0)=0$. 

If the contact set $\LSet$ is symmetric with respect to each of the coordinate axes, then $D^2p$ is diagonal. 
\end{lem}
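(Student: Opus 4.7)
My plan is to apply Almgren's monotonicity formula to the auxiliary function $z := u - u \circ \sigma$, where $\sigma$ realizes the given symmetry of the contact set: $\sigma(X) = -X$ for the first statement and $\sigma = \sigma_j$, reflection in $\{x_j = 0\}$, for the second. First I would verify that $\tilde u := u \circ \sigma$ belongs to $\GC$ with $\Lambda(\tilde u) = \LSet$ (from the $\sigma$-invariance of $\LSet$) and has polynomial expansion $p \circ \sigma$ (via the uniqueness statement in Theorem \ref{ThmBijection}). Consequently $z$ is even in $y$, odd under $\sigma$, vanishes on $\LSet$, and has $\Delta z$ supported in $\LSet$ by Lemma \ref{LemIdentifyDelta}. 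In particular $z \Delta z \equiv 0$ as a distribution, which is enough to run Almgren's monotonicity formula for $z$.

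The heart of the argument is to compare $\Phi(z; 0^+)$ with $\Phi(z; \infty)$. At infinity, $z = (p - p \circ \sigma) + o(1)$, so $\Phi(z; \infty)$ is bounded by the degree of $p - p \circ \sigma$: at most $1$ for $\sigma = -I$ (only the linear part of $p$ is antisymmetric) and at most $2$ for $\sigma = \sigma_j$. At the origin, the centered hypothesis places a neighborhood $B_r \cap \HPP \subset \LSet$, on which $z$ vanishes. By odd reflection in $y$ across $\HPP$, $z|_{B_r^+}$ extends to a function $\tilde z$ harmonic in $B_r$ and odd in $y$; the anti-$\sigma$-symmetry of $z$ then forces $\tilde z$ to be odd under $\sigma$ as well. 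The Taylor expansion of $\tilde z$ at $0$ therefore contains only monomials of total degree at least two (with lowest admissible terms $x_i y$ for $\sigma = -I$ and $x_j y$ for $\sigma = \sigma_j$). Since $z(x, y) = \tilde z(x, |y|)$ near the origin, this yields $\Phi(z; 0^+) \geq 2$.

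For statement (1), monotonicity then gives $2 \leq \Phi(z; 0^+) \leq \Phi(z; \infty) \leq 1$, which is impossible unless $z \equiv 0$; hence $p = p \circ \sigma$ and $\nabla p(0) = 0$. For statement (2), applying (1) first eliminates the linear part of $p$, making $p - p \circ \sigma_j$ purely quadratic; if it is nonzero, monotonicity pinches $\Phi(z; \cdot) \equiv 2$, so $z$ is globally $2$-homogeneous. Then $\Delta z$ is a $0$-homogeneous distribution supported in the compact set $\LSet$, forcing $\mathrm{supp}(\Delta z) \subset \{0\}$; combined with the absolutely-continuous form $\Delta z = 2(\partial_y u - \partial_y \tilde u)\, \mathcal{H}^d|_{\LSet}$ from Lemma \ref{LemIdentifyDelta}, this gives $\Delta z \equiv 0$. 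Then $z$ is a $2$-homogeneous harmonic polynomial vanishing on an open subset of $\HPP$, hence $z \equiv 0$, and $p = p \circ \sigma_j$ for each $j$ yields $D^2 p$ diagonal. The main obstacles will be rigorously verifying $\Phi(z; 0^+) \geq 2$ from the reflection-plus-symmetry argument and closing the equality case in part~(2) via the cone structure of homogeneous distributions.
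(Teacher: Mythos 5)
Your argument is correct, but it takes a genuinely different route from the paper's. The paper transports the symmetry through its own constructive machinery: for the given $p$, write $p(x,y)=\tilde p(x+A^{-1}b,y)$ with $\tilde p$ having $\nabla\tilde p(0)=0$, invoke Lemma~\ref{LemMainLemma2} to produce a solution $\tilde u$ whose contact set is centered at the origin (this comes for free from the ellipsoidal obstacle-problem approximants in that construction), translate it back to $w(x,y)=\tilde u(x-A^{-1}b,y)$, and then identify $w=u$ via the uniqueness in Theorem~\ref{ThmBijection}; since $\Lambda(w)$ is centered at $A^{-1}b$, the hypothesis forces $b=0$, and the rotation analogue handles the diagonalization of $A$. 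Your proof is intrinsic: it compares $\Phi(z;0^+)$ with $\Phi(z;\infty)$ for the antisymmetric difference $z=u-u\circ\sigma$, using Almgren monotonicity to force $z\equiv0$. It is more self-contained (no appeal to Lemma~\ref{LemMainLemma2}), and it makes the ``rotation'' half of the claim cleaner than the paper's rather terse treatment; the price is that you must justify Almgren monotonicity for a difference of solutions and handle the pinched equality case via homogeneity and the structure of $\Delta z$.

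Two write-up imprecisions worth flagging, though neither is a genuine gap. First, $z\Delta z\equiv0$ by itself is not quite the full input for Almgren's monotonicity: the Rellich identity also uses $(X\cdot\nabla z)\Delta z\equiv0$. This does hold here, because $\Delta z$ is carried by $\LSet\subset\HPP$, where $X\cdot\nabla z$ reduces to the tangential derivative $x\cdot\nabla_x z$, and $\nabla_x u=\nabla_x(u\circ\sigma)=0$ on $\LSet$ since both functions attain their thin-space minimum there — but you should state this. Second, for $\sigma(X)=-X$, the odd-in-$y$ reflection $\tilde z$ is \emph{even}, not odd, under $\sigma$; the correct parities are ``$\tilde z$ is odd in $y$ and odd in $x$'', which still forces the lowest Taylor monomial to be of the form $x_iy$, so the conclusion $\Phi(z;0^+)\ge2$ is unaffected. (For $\sigma=\sigma_j$ the claim ``odd under $\sigma$'' is correct as stated.)
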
 
\begin{proof}
Up to a rescaling, we can assume that the polynomial $p\in\PC$ is of the form
$$
p(x,y)=\frac12 x\cdot Ax-\frac12y^2+b\cdot x-1,
$$
where $A$  is strictly positive definite.

Assuming that $\LSet$ is centered at the origin, we first show that $b=0$. 

To this end, we note that 
$$p(x,y)=\tilde{p}(x+A^{-1}b,y),$$ 
where
$$
\tilde{p}(x,y)=\frac12 x\cdot Ax-\frac12y^2-\frac12b\cdot A^{-1}b-1.
$$
With Lemma \ref{LemMainLemma2}, we find a solution $\tilde{u}\in\GC$ such that 
$$
|\tilde{u}-\tilde{p}|(X)\to0\hem \text{ as }|X|\to\infty. 
$$
In the proof of Lemma \ref{LemMainLemma2}, the contact sets of solutions to the obstacle problem were centered at the origin. As a result, we have that $\Lambda(\tilde{u})$ is centered at the origin. 
Define $w(x,y)=\tilde{u}(x-A^{-1}b,y)$, then $\Lambda(w)$ is centered at $A^{-1}b$.

Moreover, we have
$|w-p|(X)\to0$ as $|X|\to\infty$. As a result, we have $u=w$. Since $\LSet$ is centered at the origin, we have $b=0$.

With a similar argument using rotations instead of translations, it follows that $A$ is diagonal if $\LSet$ is symmetric with respect to the coordinate axes. 
\end{proof} 

With this preparation, we prove the main result of this section.
\begin{proof}[Proof of Proposition \ref{PropUniqueness}]
Up to a rescaling, we  assume $E'\subset B_1$. As a result, we have
$$
\Delta u_1=\Delta u_2=0 \hem\text{ in }B_1^c.
$$

Let $p_1$ and $p_2$ be the polynomial expansions at infinity for $u_1$ and $u_2$ as in Theorem \ref{ThmBijection}. With Lemma \ref{LemSymmetryOfPoly}, they are of the form 
$$
p_j(x,y)=\frac12\sum_{1\le k\le d}\alpha^j_k(x_k^j)^2-\beta^jy^2-c_j \hem\text{ for }j=1 \text{ and }2,
$$
where $\alpha_k^j>0$.

In particular, after the multiplication by a constant, we might assume 
$
\alpha_1^1=\alpha_1^2.
$ That is, 
\begin{equation}
\label{PointOfNormalization}
\frac{\partial}{\partial x_1}p_1=\frac{\partial}{\partial x_1}p_2 \hem\text{ in }\R^{d+1}.
\end{equation}

For $j=1,2$, we have $|u_j-p_j|(X)\to 0$ as $|X|\to\infty$, and that $u_j$ and $p_j$ are harmonic outside $B_1$. Standard elliptic estimate gives $|\frac{\partial}{\partial x_1}(u_j-p_j)|(X)\to0$ as $|X|\to\infty.$ With \eqref{PointOfNormalization}, we have 
$$
|\frac{\partial}{\partial x_1}(u_1-u_2)|(X)\to0 \hem\text{ as }|X|\to\infty.
$$
Together with $\frac{\partial}{\partial x_1}(u_1-u_2)=0$ on $E'$, we can apply the comparison principle to the harmonic function $\frac{\partial}{\partial x_1}(u_1-u_2)$ in $\R^{d+1}\backslash E'$ to obtain
$$
\frac{\partial}{\partial x_1}(u_1-u_2)=0 \hem\text{ in }\R^{d+1}.
$$
Consequently, the difference $u_1-u_2$ is independent of the $x_1$-variable. With $u_1=u_2$ on $E'$, we have 
$$
u_1-u_2=0 \hem\text{ on }\{x_1\in\R, y=0\}\cap\{|x_j|\le b_j \text{ for }j=2,3,\dots d\}.
$$
With Lemma \ref{LemIdentifyDelta}, we have
$$
\frac{\partial}{\partial y}(u_1-u_2)=0  \hem\text{ on }\{x_1\in\R, y=0\}\cap\{|x_j|\le b_j \text{ for }j=2,3,\dots d\}\backslash E'.
$$
Unique continuation principle for harmonic functions imply $u_1=u_2$ in $\R^{d+1}$.
\end{proof}

\appendix
\section{Expansion of solutions  with compact contact sets and polynomial growth}
In \cite{ERW}, the authors constructed a bijection between a special class of polynomials and the space of solutions to the thin obstacle problem with compact contact sets and polynomial growth. Since they considered general fractional order elliptic operators, this bijection was constructed for solutions in $\R^{d+1}$ under the assumption that $d\ge3.$

In this appendix, we establish this bijection  for $d\ge 2$ when the operator is the Laplacian. Our proof differs from the one in \cite{ERW}.

\vem

Let $u$ be a global solution to the thin obstacle problem \eqref{TOP}. Recall the definition of its contact set $\LSet$ from \eqref{CSet}. If the solution $u$ satifies
\begin{equation}
\label{EqnPolyGrowth}
\LSet\neq\emptyset \text{ is compact, and }
\sup_{X\in\R^{d+1}}\frac{|u(X)|}{|X|^k+1}<\infty\hem \text{ for some }k\in\N,
\end{equation}
then we write
$$
u\in\GCk.
$$
Correspondingly, we consider the class of polynomials whose degrees are at most $k$ and  satisfy
$$
\Delta p=0 \text{ in }\R^{d+1},\hem  p(\cdot, y)=p(\cdot,-y) \text{ for all }y\in\R,
$$
$$
\text{ and } \hem\{x\in\R^d: \hem p(x,0)\le 0\}\neq\emptyset \text{ is bounded.}
$$ 
For a polynomial $p$ satisfying these conditions, we write
$$
p\in\PCk.
$$

The main result in this appendix is:
\begin{thm}
\label{ThmMainAppen}
Assuming $d\ge2,$ for each $u\in\GCk$, there is $p\in\PCk$ such that 
$$
u=p+v,
$$
where $v$ is the unique solution to the thin obstacle problem that vanishes at inifinity and has  $-p$ as the obstacle, namely, 
$$
\begin{cases}
\Delta v\le 0 &\text{ in $\R^{d+1}$,}\\
v\ge-p &\text{ on $\HPP$,}\\
\Delta v=0 &\text{ in $\{y\neq0\}\cup\{v>-p\},$}\\
v(\cdot,y)=v(\cdot,-y)&\text{ for all $y\in\R$,}\\
v(x,y)\to0 &\text{ as $|(x,y)|\to+\infty.$}
\end{cases}
$$

This map $p\mapsto u$ is a bijection between $\PCk$ and $\GCk$. 
\end{thm}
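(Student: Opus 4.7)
The bijection is most naturally constructed via its inverse, using the singular-measure identification of $\Delta u$ from Lemma \ref{LemIdentifyDelta}. Given $u\in\GCk$, the distribution $\mu := \Delta u = 2\partial_y u\,\cH^{d}|_{\Lambda(u)}$ is a non-positive Radon measure with compact support. In the ambient dimension $d+1\ge 3$, the Newtonian potential
\begin{equation}
 v(X) := \kappa_{d+1}\int|X-Y|^{1-d}\,d(-\mu)(Y)
\end{equation}
is well-defined, non-negative, satisfies $\Delta v = \mu$, and decays like $|X|^{1-d}$ at infinity. This is the single place where the dimension hypothesis enters, and $d\ge 2$ is precisely enough for the Newtonian kernel to vanish at infinity. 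Setting $p := u-v$, one obtains a globally harmonic function inheriting the polynomial growth of $u$; by Liouville, $p$ is a harmonic polynomial of degree at most $k$. Direct verification then shows that $v$ solves the obstacle problem stated in the theorem, with obstacle $-p$ and $v\to 0$ at infinity.

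Checking $p\in\PCk$ is mostly routine: evenness in $y$ follows from the evenness of $u$ and of $\mathrm{supp}\,\mu\subset\HPP$, and harmonicity and the degree bound are by construction. The set $\{p(\cdot,0)\le 0\}$ is non-empty since $p = -v\le 0$ on $\Lambda(u)\neq\emptyset$. For the critical boundedness assertion, I would combine the inclusion $\Lambda(u)\subset\{p(\cdot,0)\le 0\}$ with the $\cH^d$-structure of $\mu$: if $\{p(\cdot,0)\le 0\}$ were unbounded, then either $\Lambda(u)$ has $\cH^d$-measure zero (forcing $\mu=0$, $v=0$, $u=p$, and then $\Lambda(u)=\{p(\cdot,0)\le 0\}$ unbounded, contradicting compactness of $\Lambda(u)$), or the decay of $v$ together with $u\ge 0$ on $\HPP$ pins $p(\cdot,0)$ asymptotically to zero on an unbounded branch, which is ruled out by the polynomial structure of $p$ together with the harmonicity-plus-evenness forcing $p\equiv 0$ globally from $p|_\HPP\equiv 0$. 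Injectivity of $p\mapsto u$ is then immediate: two decompositions of the same $u$ give a harmonic polynomial vanishing at infinity, hence identically zero.

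For the reverse direction, given $p\in\PCk$, I would produce $v$ by exhausting $\R^{d+1}$ with balls $B_R$ and solving the obstacle problem with obstacle $-p$ on $\HPP\cap B_R$ and zero boundary data on $\partial B_R$. Since $(-p)_+$ is compactly supported on $\HPP$ (as $\{p\le 0\}$ is bounded), the admissible set is non-empty and the standard variational theory produces a unique symmetric-in-$y$ minimizer $v_R\ge 0$. A uniform potential-theoretic upper bound $v_R(X)\le C|X|^{1-d}$ — from comparison with a Newtonian-potential majorant of $(-p)_+$, available because $d+1\ge 3$ — lets me send $R\to\infty$ and extract the limit $v$ with the required decay. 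Uniqueness of $v$ follows from the maximum principle applied to the difference of two candidate solutions vanishing at infinity, and setting $u := p+v$ yields an element of $\GCk$ since $\Lambda(u)\subset\{p(\cdot,0)\le 0\}$ is automatically compact.

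The main obstacle will be the boundedness assertion for $\{p(\cdot,0)\le 0\}$ sketched in the second paragraph: one has to carefully rule out degenerate geometries in which an unbounded lower-dimensional branch of $\{p\le 0\}$ could survive without forcing $\Lambda(u)$ to be non-compact or violating $u\ge 0$ on $\HPP$. Once this is secured, the existence of $v$ with the right decay, and the verification that $u:=p+v\in\GCk$, become essentially routine consequences of the compactness of $\mathrm{supp}(-p)_+$ and the Newtonian-kernel estimates available in dimension $d+1\ge 3$.
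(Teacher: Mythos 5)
Your overall architecture matches the paper's Lemmas \ref{LemSolToPolyAppen} and \ref{LemPolyToSolAppen}: construct $p$ from $u$, construct $u$ from $p$, and appeal to the comparison principle for uniqueness in both directions. The two variations you make (producing $p$ by subtracting the Newtonian potential of $\Delta u$ rather than quoting an exterior Liouville theorem, and solving for $v_R$ with obstacle $-p$ and zero boundary data rather than solving for $u_R$ with boundary data $p$) are harmless reformulations of the same steps. The injectivity remark, the construction of the barrier for the decay of $v_R$, and the passage to the limit are all in order.

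The genuine gap is exactly where you flag it: the proof that $\{p(\cdot,0)\le 0\}$ is bounded. Your argument is that the decay of $v$ forces $p(\cdot,0)\to 0$ along any unbounded sequence in $\{p(\cdot,0)\le 0\}$, and you then assert this ``is ruled out by \dots harmonicity-plus-evenness forcing $p\equiv 0$ globally from $p|_\HPP\equiv 0$.'' But the first part only yields $p(x_n,0)\to 0$ along a sequence $|x_n|\to\infty$; it does not give $p|_\HPP\equiv 0$. Take $d=2$ and $p=x_1^2-y^2$: this is harmonic, even in $y$, non-negative on $\HPP$, and $\{p(\cdot,0)\le 0\}=\{x_1=0\}$ is an unbounded line on which $p$ vanishes identically, while $p\not\equiv 0$ on $\HPP$. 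So the implication you invoke is false, and your sketch does not rule out such degenerate leading terms. In fact $p=x_1^2-y^2$ is itself a homogeneous solution to \eqref{TOP}, so superficial structural constraints cannot exclude it.

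What closes the gap in the paper's Lemma \ref{LemSolToPolyAppen} is the blow-down argument you do not use. Since $\LSet$ is compact, the rescalings $u_R(X)=u(RX)/\|u(R\,\cdot)\|_{L^2(\partial B_1)}$ have contact sets shrinking to $\{0\}$, so any subsequential limit $u_\infty$ is a homogeneous solution with $\Lambda(u_\infty)=\{0\}$; then $\Delta u_\infty$ vanishes (being a measure on a set of $\cH^d$-measure zero), so $u_\infty$ is a homogeneous harmonic polynomial (Garofalo--Petrosyan), even in $y$, non-negative on $\HPP$, and vanishing on $\HPP$ \emph{only at the origin}. Matching with the expansion $|u-p|\to 0$ identifies the top homogeneous part $q_m$ of $p$ with a positive multiple of $u_\infty$, hence $q_m>0$ on $\HPP\setminus\{0\}$, which then gives boundedness of $\{p(\cdot,0)\le 0\}$ by the obvious leading-order estimate. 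It is precisely the information $\Lambda(u_\infty)=\{0\}$ — which excludes things like $x_1^2-y^2$ — that your sketch is missing; without it the boundedness claim does not follow.
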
 

The following lemma shows how to find the polynomial expansion for a given solution:
\begin{lem}
\label{LemSolToPolyAppen}
Suppose that $u\in\GCk$  in $\R^{d+1}$ with $d\ge2$, then there is a unique polynomial $p\in\PCk$ such that 
$$
|u-p|(X)\to 0 \text{ as }|X|\to\infty.
$$
\end{lem}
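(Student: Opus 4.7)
The approach is to extract the polynomial $p$ from $u$ via Newtonian potential theory, and then verify that the resulting polynomial lies in $\PCk$. By Lemma \ref{LemIdentifyDelta}, the distribution $\mu := -\Delta u$ is a non-negative Radon measure supported on the compact set $\LSet \subset \HPP$. Since $d+1 \geq 3$, its Newtonian potential $V_\mu$ in $\R^{d+1}$ is well-defined, non-negative, continuous, satisfies $\Delta V_\mu = -\mu$, and decays like $O(|X|^{1-d})$ at infinity. Consequently $p := u - V_\mu$ will be harmonic on all of $\R^{d+1}$ and inherit from $u$ the growth bound $|p(X)| \le C(|X|^k+1)$, so by the classical Liouville-type theorem it is a harmonic polynomial of degree $\le k$. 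The identity $u - p = V_\mu \to 0$ at infinity then provides the required decay.

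\textbf{Membership in $\PCk$.} Harmonicity, degree, and the $y$-symmetry of $p$ (inherited from $u$ via $\mu$ and $V_\mu$, both of which are symmetric in $y$) are immediate. For non-emptiness of $\{p(\cdot,0) \leq 0\}$, note that $u = 0$ on $\LSet$ forces $p = -V_\mu \leq 0$ there, so $\LSet \subset \{x\in\R^d : p(x,0) \leq 0\}$. The serious step, which I expect to be the main obstacle, is showing that $\{x \in \R^d : p(x,0) \leq 0\}$ is bounded.

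\textbf{Boundedness (the hard step).} My plan is a contradiction argument. Assume there exist $x_n \in \R^d$ with $|x_n| \to \infty$ and $p(x_n,0) \leq 0$. Using $u \geq 0$ on $\HPP$ together with $u - p = V_\mu \to 0$, both $u(x_n, 0)$ and $p(x_n, 0)$ must tend to $0$. Since $\LSet$ is compact, $u$ is harmonic and non-negative on $B_1(x_n,0) \subset \R^{d+1}$ for all large $n$, so the mean value property forces $u \to 0$ uniformly on $B_{1/2}(x_n,0)$, and hence $p = u - V_\mu \to 0$ uniformly on those balls. Now consider the translated polynomials $p_n(X) := p(X + (x_n,0))$, all of degree $\le k$. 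They converge to $0$ in $L^\infty(B_{1/2})$. Since polynomials of degree at most $k$ form a finite-dimensional space on which all norms are equivalent, $p_n \to 0$ coefficient-wise in any fixed monomial basis; but translation preserves the degree-$k$ homogeneous part of a polynomial, so the degree-$k$ part of $p$ itself must vanish. Iterating the same argument in lower degrees yields $p \equiv 0$. Then $u = V_\mu$, which is impossible: if $\mu \neq 0$, strict positivity of the Newtonian kernel gives $V_\mu > 0$ everywhere, so $\LSet = \emptyset$; if $\mu = 0$, then $u \equiv 0$ and $\LSet = \HPP$ is unbounded. Either way, $u \notin \GCk$, the desired contradiction.

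\textbf{Uniqueness.} If $p_1, p_2$ both satisfy $u - p_i \to 0$ at infinity, then $p_1 - p_2$ is a polynomial vanishing at infinity, hence identically zero.
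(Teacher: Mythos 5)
Your construction of $p$ via subtracting the Newtonian potential $V_\mu$ of $\mu = -\Delta u$, and the use of Liouville for the harmonic remainder, is a clean equivalent of the paper's route through the exterior-domain Liouville theorem; your uniqueness argument is also fine. The gap is in the boundedness step. You assert that $u$ is non-negative on the full $(d+1)$-dimensional ball $B_1(x_n,0)$, but the thin obstacle constraint only gives $u \ge 0$ \emph{on the thin space} $\HPP$. Off the thin space $u$ is genuinely negative for large $|X|$: since $u - p \to 0$ at infinity and $p$ is a nonconstant harmonic polynomial whose restriction to $\HPP$ is bounded below (indeed eventually positive, by what we are trying to prove), its top homogeneous part $q_m$ must take negative values off $\HPP$ (a harmonic polynomial has spherical mean zero), so $u$ does too. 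Without non-negativity of $u$ in the solid ball, the Harnack/mean-value argument does not force $u\to 0$ uniformly on $B_{1/2}(x_n,0)$, and so the translated polynomials $p_n$ are not shown to converge to $0$ there; the iteration that kills the homogeneous parts of $p$ never starts. The weaker one-sided information you actually have (namely $u\ge 0$ on the thin disk, hence $p(\cdot,0)\ge -C|x_n|^{1-d}$ on $B_1(x_n)\cap\R^d$, together with $p(x_n,0)\le 0$) is consistent with, e.g., a translate of a non-negative polynomial vanishing somewhere on a ray, so it cannot by itself force $q_m$ to vanish.

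The paper closes this step by a blow-down: rescale $u_R(X) = u(RX)/\|u(R\cdot)\|_{L^2(\partial B_1)}$, extract a locally uniform limit $u_\infty$, which is a homogeneous global solution of the thin obstacle problem with $\Lambda(u_\infty)=\{0\}$, and invoke the Garofalo–Petrosyan classification of such homogeneous solutions as (even-degree) polynomials that are strictly positive on $\HPP\setminus\{0\}$. Matching degrees shows $q_m = c\,u_\infty$ with $c>0$, so $q_m>0$ on $\HPP\setminus\{0\}$ and $\{p(\cdot,0)\le 0\}$ is bounded. To repair your proposal you would need to import this blow-down/classification input (or an equivalent positivity statement for the leading homogeneous part); the potential-theoretic setup you chose for the other steps is compatible with it, but does not replace it.
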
 
\begin{proof}Once we find a polynomial $p$ as in the lemma, its uniqueness follows from the comparison principle for harmonic functions. Below we establish the existence of such a polynomial. 

Up to a rescaling, we  assume 
$
\LSet\subset B_1.
$ As a result, we have
$$
\Delta u=0 \text{ in }B_1^c.
$$
By the Liouville theorem for harmonic functions in exterior domains (see, for instance, Theorem 2.1 in \cite{LLY}), the growth rate  in \eqref{EqnPolyGrowth} gives a unique polynomial $p$ of degree at most $k$ such that 
\begin{equation}
\label{EqnVanishingAtInfitnityAppendix}
|u-p|=O(|X|^{1-d}).
\end{equation}

This polynomial $p$ satisifes
$$
\Delta p=0 \text{ in }\R^{d+1}.
$$
Uniqueness of this polynomial and our assumption $u(\cdot,y)=u(\cdot, -y)$ as in \eqref{TOP} imply that
$$
p(\cdot, y)=p(\cdot,-y) \text{ for all }y\in\R.
$$
With $|u-p|\to0$ at infinity,  the super harmonicity of $u$ and the harmonicity of $p$, the comparison principle gives
$
u\ge p \text{ in }\R^{d+1}.
$ As a result, our assumption that $\LSet\neq\emptyset$ implies that 
$$
\{x\in\R^d:\hem p(x,0)\le 0\}\neq\emptyset.
$$
It remains to show that this set is bounded in $\HPP$. 

To this end, we decompose $p$ into homogeneous terms
\begin{equation}
\label{EqnHomoExpansionOfP}
p=\sum_{0\le j\le m}q_j,
\end{equation}
where $m\le k$ is the degree of $p$, and each $q_j$ is homogeneous of degree $j$. 

For $R>0$, define 
$$
u_R(X):=\frac{u(RX)}{\|u(R\cdot)\|_{L^2(\partial B_1)}}.
$$
Along a subsequence of $R\to+\infty$, we have 
$$
u_R\to u_{\infty} \hem\text{ locally uniformly in }\R^{d+1},
$$
where $u_\infty$ is a homogeneous solution to \eqref{TOP} with 
$$
\Lambda(u_\infty)=\{0\}.
$$
Such solutions are classified by Garofalo-Petrosyan \cite{GP}, who showed that $u_\infty$ is a polynomial of even degree.

To be consistent with \eqref{EqnVanishingAtInfitnityAppendix} and \eqref{EqnHomoExpansionOfP}, we must have that $m$ is even, and that
$$
q_m=cu_\infty \text{ for some }c>0.
$$
Since $q_m$ is of the highest degree in the expansion of $p$ and $q_m>0$ in $\HPP\backslash\{0\}$, we have 
$$
\{x\in\R^d:\hem p(x,0)\le 0\} \text{ is compact}.
$$ 

Therefore, the polynomial $p$ belongs to the class $\PCk.$
\end{proof} 

The following lemma shows how to find a solution with a prescribed polynomial expansion:
\begin{lem}
\label{LemPolyToSolAppen}
Suppose that $p\in\PCk$ in $\R^{d+1}$ with $d\ge2$, then there is a unique solution $u\in\GCk$ such that 
$$
|u-p|(X)\to 0 \text{ as }|X|\to\infty.
$$
\end{lem}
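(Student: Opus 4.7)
The plan is to construct $u$ in the form $u=p+v$, where $v$ solves the thin obstacle problem on $\R^{d+1}$ with $-p$ as the obstacle on $\HPP$ and $v\to 0$ at infinity (so that $|u-p|\to 0$). The function $v$ is built by approximation on expanding balls: for each large $R$, let $v_R$ be the unique minimizer of the Dirichlet energy over $w\in H^1(B_R)$ with $w=0$ on $\partial B_R$, $w\ge -p$ on $\HPP\cap B_R$, and $w(x,y)=w(x,-y)$. The admissible class is non-empty (take $(-p)_+$ multiplied by a smooth cutoff supported in $B_R$), so standard variational theory gives existence and uniqueness of $v_R$, which in turn satisfies the thin obstacle problem on $B_R$ with obstacle $-p$.

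The next step is a uniform bound $0\le v_R\le F$ with $F$ independent of $R$. The lower bound follows from the minimum principle applied to the superharmonic function $v_R$ with zero boundary data. For the upper bound, let $K:=\{p(\cdot,0)\le 0\}\subset\HPP$, which is compact by definition of $\PCk$, and define the single-layer potential
$$
F(X):=c\int_K |X-Y|^{-(d-1)}\,\dH{d},
$$
with $c>0$ chosen large enough that $cF\ge -p$ on $K$; this is possible since the integral is continuous and strictly positive on the compact set $K$. The function $F$ is bounded, superharmonic on $\R^{d+1}$ (harmonic off $\HPP$), even in $y$, satisfies $F\ge -p$ throughout $\HPP$, and decays like $|X|^{-(d-1)}$ at infinity, which crucially uses $d\ge 2$. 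A standard comparison argument for the obstacle problem (applied to $v_R-F$, which is subharmonic on $\{v_R>-p\}$ and nonpositive on its boundary) then gives $v_R\le F$ in $B_R$. Standard $C^{1,1/2}$-regularity on each side of $\HPP$ supplies local compactness, and a subsequence of $v_R$ converges locally to some $v$ that inherits the obstacle problem structure on $\R^{d+1}$ and satisfies $v\to 0$ at infinity from $0\le v\le F$.

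Setting $u:=p+v$, the equation \eqref{TOP} follows directly from the properties of $v$ together with $\Delta p=0$ and the evenness of $p$ in $y$. The contact set $\Lambda(u)=\{v=-p\}\cap\HPP\subset K$ is compact, and $|u|\le|p|+\|F\|_{L^\infty}\le C(|X|^k+1)$, so $u\in\GCk$ once I verify $\Lambda(u)\ne\emptyset$. This non-emptiness follows by contradiction: otherwise $v>-p$ throughout $\HPP$, so $\Delta v=0$ on all of $\R^{d+1}$, and a nonnegative harmonic function on $\R^{d+1}$ vanishing at infinity must be identically zero, forcing $p>0$ on $\HPP$ in contradiction to $K\ne\emptyset$. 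Uniqueness of $u$ reduces to uniqueness of $v$ in the obstacle problem, which follows from a standard comparison argument applied to $v_1-v_2$. The main technical hurdle is the construction of the bounded, decaying supersolution $F$ that dominates $-p$ on $\HPP$: the decay rate $|X|^{-(d-1)}$ of the single-layer potential is exactly what forces the dimensional restriction $d\ge 2$, and the same restriction reappears in the Liouville step used for non-emptiness of $\Lambda(u)$.
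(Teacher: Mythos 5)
Your argument is correct and has the same overall architecture as the paper's proof---exhaustion by balls, a comparison/barrier bound that is uniform in $R$ and decays at infinity, passage to the limit by local regularity, non-emptiness of the contact set via a Liouville-type argument, and uniqueness by comparison---but the implementation differs in two places. You approximate the correction $v$ with zero boundary data on $\partial B_R$, whereas the paper solves the thin obstacle problem in $B_R$ with boundary data $p$ and works with $u_R$ directly; this is a cosmetic difference. More substantively, your decaying upper barrier is the single-layer potential $F$ of $K=\{p(\cdot,0)\le 0\}$, while the paper first notes that $p+m$ is itself a global solution (giving $u_R\le p+m$ by comparison) and then converts the bound $u_R-p\le m$ into decay through the explicit radial harmonic barrier $\Psi_R(X)=\frac{m}{1-R^{1-d}}\big(|X|^{1-d}-R^{1-d}\big)$ on $B_R\setminus B_1$, where $u_R-p$ is harmonic because $\Lambda(u_R)\subset\{p(\cdot,0)\le0\}\subset B_1$. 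Both barriers give the same decay rate $|X|^{1-d}$ and both use $d\ge2$; yours produces $0\le v_R\le F$ in a single comparison but requires the potential-theoretic facts that $F$ is bounded, continuous, even in $y$ and superharmonic, while the paper's route needs only the maximum principle on an annulus. Two small points to patch in your write-up: (i) the justification ``the integral is strictly positive on $K$'' fails when $\cH^{d}(K)=0$ (e.g.\ $K$ a single point, as for $p=\tfrac12\sum a_jx_j^2-\tfrac12y^2$); in that degenerate case, however, $p\ge0$ on $\HPP$, so $-p\le 0\le F$ on the whole thin space for any $c$ and the comparison $v_R\le F$ still goes through, so only a case distinction is missing; (ii) in the comparison step, $v_R-F$ is subharmonic on $\{v_R>-p\}\cup\{y\neq0\}$, not just on $\{v_R>-p\}\cap\HPP$---cleanest is to run the maximum principle on the open set $\{v_R>F\}$, which avoids the contact set precisely because $F\ge -p$ on $\HPP$.
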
 
\begin{proof}
Once we find the solution $u$ as in the lemma, its uniqueness follows from the comparison principle between solutions to the thin obstacle problem. Below we show the existence of such a solution. 

Up to a rescaling, we assume 
$$
\{x\in\R^d:\hem p(x,0)\le0\}\subset B_1.
$$
The compactness of this set implies that 
$\inf_{x\in\R^{d}}p(x,0)$ is finite. As a result, we can find $m\in[0,+\infty)$ such that 
$$
p+m\ge0 \hem\text{ on }\HPP.
$$
In particular, $p+m$ is a solution to the thin obstacle problem \eqref{TOP}.

For each $R>1$, let $u_R$ be the solution to the thin obstacle problem in $B_R$ with $p$ as the boundary data, that is, 
$$\begin{cases}
\Delta u_R\le 0 &\text{ in }B_R,\\
u_R\ge 0 &\text{ on }B_R\cap\{y=0\},\\
\Delta u_R=0 &\text{ in }B_R\cap(\{y\neq0\}\cup\{u_R>0\}),\\
u_R=p &\text{ on }\partial B_R.
\end{cases}$$
The super harmonicity of $u_R$ and the harmonicity of $p$ imply, by the comparison principle, that
$$
u_R\ge p \hem\text{ in }B_R.
$$
In particular, we have 
$
\Lambda(u_R)\subset\{x\in\R^d:\hem p(x,0)\le0\}\subset B_1,
$
and as a consequence, 
$$
\Delta u_R=0 \hem\text{ in }B_1^c.
$$

On the other hand, since $p+m$ is a solution to the thin obstacle problem in $B_R$ with $p+m\ge u_R$ on $\partial B_R$, the comparison principle between solutions gives
$$
u_R\le p+m \hem\text{ in }B_R.
$$

For each $R>2$, define a barrier $\Psi_R$ as 
$$
\Psi_R(X)=\frac{m}{1-R^{1-d}}(\frac{1}{|X|^{d-1}}-\frac{1}{R^{d-1}}).
$$
Then $\Psi_R$ is harmonic in $B_R\backslash B_1$, and equals $m$ on $\partial B_1$ and $0$ on $\partial B_R$.  Applying the comparison principle between $u_R-p$ and $\Psi_R$ on $B_R\backslash B_1$, we have
$$
u_R-p\le \Psi_R\le 2m/|X|^{d-1} \hem\text{ in }B_R\backslash B_1.
$$

With $p\le u_R\le p+m$ in $B_R$, the family $\{u_R\}$ is locally compact. As a result, along a subsequence of $R\to+\infty$, we have
$$
u_R\to u \text{ locally uniformly in }\R^{d+1},
$$
where $u$ is a solution to the thin obstacle problem in $\R^{d+1}$. Moreover, this solution inherits the following bounds from $u_R$
$$
p\le u\le p+2m/|X|^{d-1}.
$$
In particular, we have 
$$
|u-p|(X)\to 0 \hem\text{ as }X\to \infty,
$$
and $u$ has polynomial growth of degree at most $k$.  
Moreover, the comparison $u\ge p$ implies that $\LSet\subset\{x\in\R^d:\hem p(x,0)\le0\}$ is compact. 

It remains to see that the contact set $\LSet$ is non-empty. Suppose it is empty, then $u$ is harmonic in $\R^{d+1}$. Then we must have $u=p$ in $\R^{d+1}$. This is a contradiction since we are assuming that $\{x\in\R^d:\hem p(x,0)\le0\}\neq\emptyset$ for $p\in\PCk.$
\end{proof} 

The main result Theorem \ref{ThmMainAppen} follows:
\begin{proof}[Proof of Theorem \ref{ThmMainAppen}]
Given $p\in\PCk$, Lemma \ref{LemPolyToSolAppen} maps it to a solution $u\in\GCk$. Lemma \ref{LemSolToPolyAppen} implies that this map is a bijection. 

Define $v$ as the difference between $u$ and $p$, namely, $v=u-p$. The equations satisfied by $v$ follow from \eqref{TOP} as well as the harmonicity of $p$. 
\end{proof}


\end{document}